\pgfplotsset{compat=1.17} 
\numberwithin{equation}{section}
\theoremstyle{plain}
\newtheorem{thm}{Theorem}[section]
\newtheorem{lem}[thm]{Lemma}
\newtheorem{prop}[thm]{Proposition}
\newtheorem{cor}[thm]{Corollary}
\theoremstyle{definition}
\newtheorem{remark}[thm]{Remark}
\DeclareMathOperator\supp{supp}
\title{Whispering gallery modes for a transmission problem}
\author{Spyridon Filippas \footnote{Laboratoire de Math\'ematiques d'Orsay, Universit\'e Paris-Saclay, B\^atiment 307, 91405 Orsay Cedex France, email: spyridon.filippas@universite-paris-saclay.fr.} }
\date{}
\def\keywords{
    \vspace{1ex}
    \noindent
    \if@twocolumn
      \small{\bf  Keywords}\/---$\!$    \else
      \begin{center}\small\ {\bf Keywords}\end{center}\quotation\small
    \fi}
\def\endkeywords{\vspace{0.6em}\par\if@twocolumn\else\endquotation\fi
    \normalsize\rm}
\begin{document}

\maketitle

	\begin{abstract}
  We construct a specific family of eigenfunctions for a Laplace operator with coefficients having a jump across an interface. These eigenfunctions have an exponential concentration arbitrarily close to the interface, and therefore could be considered as whispering gallery modes. The proof is based on an appropriate Agmon estimate. We deduce as a corollary that the quantitative unique continuation result for waves propagating in singular media proved by the author in~\cite{fQuantJump} is optimal.
\end{abstract}

\begin{keywords}
  \noindent
Agmon estimates, eigenfunctions, transmission problem, wave propagation
\medskip

\noindent
\textbf{2010 Mathematics Subject Classification:}
35B60, %      Continuation and prolongation of solutions
%35B45, %      A priori estimates
%35H10, %   	Hypoelliptic equations
%35H20,   	Subelliptic equations
%Secondaire
47F05,       % Partial differential operators 
%35L05, %      Wave equation
93B07, %      Observability
%93B05, %      Controllability
%35B25, % EDP Singular perturbations 
% 35F05, % Linear first-order equations 
%   35K05,  % 	Heat equation
% 93C20, % Control Systems governed by partial differential equations 
 % 93C73 % Control Systems Perturbations 
%35B09 %EDP Positive solutions
%35K08, % Heat kernel 
 %35Q93 % PDEs in connection with control and optimization 
35P20.  %  	Asymptotic distribution of eigenvalues and eigenfunction
\end{keywords}

\newcommand{\dom}{\Omega_-}
\newcommand{\domm}{\Omega_+}

\newcommand{\cm}{c_{-}}
\newcommand{\cp}{c_{+}}

\newcommand{\rel}{\operatorname{Re} \lambda}
	\newcommand{\iml}{\operatorname{Im} \lambda}
	\newcommand{\norm}[3]{\left\Vert #1 \right\Vert_{#2}^{#3}}
	\newcommand{\normsurf}[3]{\left\vert #1 \right\vert_{#2}^{#3}}
	\newcommand{\R}{\mathbb{R}}
	\newcommand{\Rp}{\R_+}
	\newcommand{\Rm}{\R_-}
	\newcommand{\lr}{L^2(\R)}
	\newcommand{\lrp}{L^2(\R^+)}
	\newcommand{\lrm}{L^2(\R^-)}

 \newcommand{\M}{\mathcal{M}}
 \newcommand{\W}{\mathcal{W}}

 \newcommand{\p}{-\Delta_c}

\newcommand{\A}{\mathcal{A}}
\newcommand{\1}{\mathds
1}

\newcommand{\tp}{\psi_h }

\newcommand{\re}{\rho_E}

\newcommand{\ph}{\psi_h}

\newcommand{\ve}{\varepsilon}

\newcommand{\vc}{\overline{V}_c}

\section{Introduction}

Let $\mathcal{M}$ be a compact, connected subset of $\R^n$ with smooth boundary $\partial \M$ and $S$ a smooth hypersurface such that we have the following partition: $\text{Int}(\mathcal{M})\backslash S= \dom\cup \domm$ with $\dom\cap \domm= \emptyset$. We consider the operator $\p:=-\textnormal{div} (c \nabla \cdot)$ acting on $\mathcal{M}$ with $c$ being strictly positive and piecewise smooth but having a jump across the interface $S$. In this note we construct, for specific choices of $\dom,\domm$, $S$ and $c$ eigenfunctions of $\p$ which concentrate exponentially near the interface $S$. These maximally vanishing eigenfunctions are sometimes called \textit{whispering gallery modes} (WGM). We show that (see Theorem~\ref{thm for eigen of -delta} or Theorem~\ref{theorem for disk or surfaces} for a more precise statement):

\begin{thm}
\label{thm generic}

There are sets $\M$, $S$ and coefficients $c$ such that there exist sequences $(\lambda_n)_{n \in \mathbb{N}}$ and $(u_n)_{n \in \mathbb{N}}$ with $\lambda_n \rightarrow +\infty$ and $u_n$ satisfying the transmission conditions 
\begin{equation}
\label{trans cond}
({u_n}_{|\dom})_{|S}=({u_n}_{|\domm})_{|S} \textnormal{ and }c_+\partial_\nu({u_n}_{|\dom})_{|S}=c_-\partial_\nu({u_n}_{|\domm})_{|S},
\end{equation}
such that for all $\omega \subset \M$ with $\textnormal{dist}(\overline{\omega},S)>0$ there exist $C, d>0$ with:

\begin{equation}
\label{estimate coeur}
\norm{u_n}{L^2(\omega)}{}\leq Ce^{-d \sqrt{\lambda_n}}, \quad \p u_n=\lambda_n u_n, \quad \norm{u_n}{L^2(\M)}{}=1. 
\end{equation}

\end{thm}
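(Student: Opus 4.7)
The natural candidate is a radially symmetric transmission problem: take $\M$ to be the closed ball of radius $R_+$ in $\R^n$ (with a suitable boundary condition on $\partial\M$), $S$ the concentric sphere of radius $R_-<R_+$, so that $\dom$ is the open inner ball and $\domm$ is the spherical shell. The coefficient $c$ is chosen piecewise constant, equal to $\cm$ on $\dom$ and $\cp$ on $\domm$, with $\cm<\cp$ so that total internal reflection along $S$ creates a classically forbidden region on the $\domm$ side.

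Separation of variables reduces the problem to a family of one-dimensional transmission problems indexed by the angular momentum $k$. An eigenfunction of the form $u(x)=f(|x|)Y(x/|x|)$ with $Y$ a spherical harmonic of degree $k$ satisfies a radial ODE on $(0,R_+)$ with effective centrifugal term $k(k+n-2)/r^2$ and a jump of the principal coefficient at $r=R_-$. For $k$ large I would seek an eigenvalue $\lambda_k$ in the whispering-gallery window
\[\lambda_k\in\bigl(\cm k^2/R_-^2,\ \cp k^2/R_-^2\bigr),\]
in which regime the inner turning point $r_k^-=k\sqrt{\cm/\lambda_k}$ lies strictly inside $\dom$, so $S$ sits in the classically allowed region of the $\dom$ side, while the outer turning point $r_k^+=k\sqrt{\cp/\lambda_k}$ exceeds $R_-$, so $S$ is in the classically forbidden region of the $\domm$ side. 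Existence of $\lambda_k$ can be shown either through the explicit dispersion relation obtained by matching Bessel solutions $J_\nu, Y_\nu$ on the two sides via \eqref{trans cond}, or by a min--max argument applied to the 1D radial operator with trial functions supported in a thin shell around $S$.

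To obtain the exponential decay \eqref{estimate coeur}, I would apply an Agmon-type estimate adapted to the transmission operator $\p$, such as the one derived in \cite{fQuantJump}. Setting $h=1/\sqrt{\lambda_k}$, the natural Agmon weight $\phi$ is the distance to $S$ in the metric $\sqrt{(k^2/r^2-\lambda_k/c(r))_+}\,dr$, which I would arrange to be continuous across $S$. Multiplying the eigenvalue equation by $e^{2\phi/h}\overline{u_k}$ and integrating by parts on $\dom$ and $\domm$ separately yields, after using \eqref{trans cond} to handle the interface contributions, a bound $\|e^{\phi/h}u_k\|_{L^2(\M)}\le C$. Since $\phi(x)\gtrsim\textnormal{dist}(x,S)$ uniformly on any $\omega$ with $\textnormal{dist}(\overline\omega,S)>0$ and $1/h=\sqrt{\lambda_k}$, this translates into $\|u_k\|_{L^2(\omega)}\le Ce^{-d\sqrt{\lambda_k}}$; letting $k_n\to\infty$ produces the required sequences $(\lambda_n),(u_n)$.

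The main obstacle is the control of the interface terms generated by integration by parts against the weight $e^{2\phi/h}$: the jump of $c$ across $S$ produces boundary contributions whose sign must be consistent with the transmission conditions \eqref{trans cond}, and this forces $\phi$ and its one-sided normal derivatives to satisfy compatibility conditions at $S$ determined by the ratio $\cm/\cp$. Handling these interface terms cleanly is exactly the technical content of the Agmon estimate for transmission problems of \cite{fQuantJump}, on which the argument here rests.
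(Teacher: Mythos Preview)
Your strategy---separation of variables followed by a one-dimensional Agmon estimate---is exactly the one the paper uses. Two points deserve correction, one a misconception and one a genuine gap.

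\textbf{The interface terms are not the obstacle.} You identify the boundary contributions at $S$ from the weighted integration by parts as ``the main obstacle'' and suggest they force compatibility conditions on the one-sided normal derivatives of $\phi$. In fact they cancel for free. If $\phi$ is merely continuous (Lipschitz) across $S$ and $f\in\A_r$, then integrating $-ch^2\bigl(\partial_r^2+\tfrac1r\partial_r\bigr)f$ against $e^{2\phi/h}\bar f\,r\,dr$ on $(R_0,R_1)$ and $(R_1,R_2)$ separately produces the two interface terms
\[
-c_-R_1\,\partial_r f_-(R_1)\,e^{2\phi(R_1)/h}\bar f_-(R_1)\quad\text{and}\quad +c_+R_1\,\partial_r f_+(R_1)\,e^{2\phi(R_1)/h}\bar f_+(R_1),
\]
which are equal by the transmission condition $c_-\partial_r f_-(R_1)=c_+\partial_r f_+(R_1)$ together with $f_-(R_1)=f_+(R_1)$. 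No constraint on $\partial_\nu\phi$ is needed; indeed the paper's Agmon distance has a genuine corner at $r=R_1$. This is the content of Lemma~\ref{ipp identity}, and it is the reason the Agmon argument goes through with essentially no modification from the smooth case.

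\textbf{The gap: you must pin the eigenvalue to the bottom of the window.} Your claim that $\phi(x)\gtrsim\textnormal{dist}(x,S)$ on any $\omega$ bounded away from $S$ is false as stated. The Agmon distance vanishes on the entire classically allowed region $K_E$, which on the $\dom$ side is the annulus $[r_k^-,R_-]$ with $r_k^-=k\sqrt{\cm/\lambda_k}$. If $\lambda_k$ sits generically in your window $(\cm k^2/R_-^2,\cp k^2/R_-^2)$ then $r_k^-$ stays bounded away from $R_-$, so $K_E$ has positive thickness and any $\omega$ inside it sees no decay. For the statement to hold for \emph{all} $\omega$ with $\textnormal{dist}(\overline\omega,S)>0$ you need $r_k^-\to R_-$, i.e.\ $\lambda_k/k^2\to \cm/R_-^2=E_0$. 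The paper secures this by building a quasimode concentrated at scale $h^{2/3}$ near $r=R_-$ and using self-adjointness to find a true eigenvalue $E_h=E_0+O(h^{2/3})$ (Proposition~\ref{existence of eigen 1d}); your min--max or Bessel-matching suggestions would need to be sharpened to produce the same conclusion. You also need the geometric constraint $\cm/R_-^2<\cp/R_+^2$ so that the outer turning point $r_k^+$ exceeds $R_+$ and the whole of $\domm$ is forbidden; merely asking $r_k^+>R_-$ is not enough.

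A minor point: with your Agmon metric $\sqrt{(k^2/r^2-\lambda_k/c)_+}\,dr$ the weight $\phi$ is of size $k$, so pairing it with $h=1/\sqrt{\lambda_k}\sim 1/k$ puts $k^2$ in the exponent rather than $k\sim\sqrt{\lambda_k}$. The natural semiclassical parameter here is $h=1/k$, with rescaled potential $V_c(r)=c/r^2$ and energy $E_h=\lambda_k/k^2$, as the paper does.
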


The data $\M, S, c$ considered in the proof of Theorem~\ref{thm generic} have a rotational symmetry and possible sets for $\M$ include a disk or an annulus. The proof of this is based on an \textit{Agmon estimate} (see Section~\ref{section agmon estimate 1d}). Its main advantage is that it is quite simple and it allows to handle with minor modifications different geometries $\M, S$. Our basic toy model will be the case where $\M$ is an annulus (see Figure~\ref{corona} below) but in Section~\ref{other surfaces of revolution} we explain how one can deal with more general surfaces of revolution.

Our main motivation comes from tunneling estimates in control theory (see for instance~\cite[Section 1.2]{LL:18}). Constructing such eigenfunctions allows to saturate certain observability estimates and therefore deduce their optimality. The idea of exhibiting such examples on surfaces of revolution can be traced back to~\cite{Leb:96} and~\cite{Allibert:98}.  

Another motivation for studying WGM comes from optoelectronics. Indeed, the case where $\M$ is a disk and $S$ a smaller circle in its interior can be seen as a toy model for the orthogonal section of an optical fiber. Indeed, in this case $\Omega_-$ can be considered as the core of the optical fiber surrounded by a cladding $\Omega_+$. For the proof of Theorem~\ref{thm generic} we shall assume that $c_-<c_+$ which means that the refractive index in $\dom$ is higher than in $\domm$ and consequently light stays localized in the core by total internal reflection in the boundary between the core and the cladding. See as well the remarks after Theorem~\ref{thm for waves}.  Concerning the applications to optoelectronics, WGM have been studied numerically as well as from a theoretical point of view in~\cite{WhisperingGalleriesAnalysis1, WhisperingGalleriesAnalysis2}. In these references the eigenvalue problem for $\p$ is studied in an unbounded domain, thus becoming a resonance problem. 

In the recent work~\cite{ConcentrationAndNon} the authors obtain concentration and non-concentration properties for the eigenfunctions of $\p$ depending on the regularity of the coefficient $c$. Discontinuity of the coefficient $c$ corresponds to the case of \textit{layered media}. The methods employed and the geometric context are however different than ours since the proofs are not based on Agmon estimates but rather on the explicit form of the Green kernel of the solutions.

\subsection{Optimality of unique continuation results for operators with jumps}

We consider the geometric context described in the beginning of the introduction. 

The following spectral inequality for eigenfunctions of $\p$ is proved in~\cite [Theorem 1.2]{LRR:10} (and generalized in~\cite{le2013carleman}):

\begin{thm}[Theorem 1.2 in~\cite{LRR:10}]
\label{spectral lrr}
Let $(u_j)_{j \in \mathbb{N}}$ be a Hilbert basis of eigenfunctions of the operator $\p$ with
Dirichlet boundary conditions, satisfying the transmission conditions \eqref{trans cond}. Denote by $\lambda_j$ the associated eigenvalues, sorted in an increasing sequence. Then for any $\omega \subset \textnormal{Int}(\M)$ there exists $C>0$ such that one has, for any $a_j \in \mathbb{C}$:
$$
\sum_{\lambda_j \leq \lambda} |a_j|^2\leq C e^{C \sqrt{\lambda}}  \bigintss_{\omega} \left | \sum_{\lambda_j \leq \lambda} a_ju_j(x) \right |^2 dx.
$$    
\end{thm}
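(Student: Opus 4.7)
\emph{Plan.} I would prove this spectral inequality by the Lebeau--Robbiano method, adapted to the transmission setting. The strategy is to lift the problem to an elliptic transmission problem on $\M\times(-T,T)$ and then apply a Carleman estimate across $S\times(-T,T)$ to obtain an interpolation inequality, reading off the exponential loss $e^{C\sqrt{\lambda}}$ from the growth of the lift in the new variable.

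First, set $u_\lambda(x):=\sum_{\lambda_j\leq\lambda}a_j u_j(x)$ and define
$$
F(x,t):=\sum_{\lambda_j\leq\lambda}a_j\,u_j(x)\,\frac{\sinh(\sqrt{\lambda_j}\,t)}{\sqrt{\lambda_j}},\qquad (x,t)\in\M\times\R.
$$
Since $\p u_j=\lambda_j u_j$, the function $F$ satisfies $(\p-\partial_t^2)F=0$, i.e.\ it is harmonic for the augmented elliptic operator, with Dirichlet data on $\partial\M\times\R$ and the jump conditions \eqref{trans cond} prolonged in $t$ across $S\times\R$. One reads off that $F|_{t=0}=0$ and $\partial_tF|_{t=0}=u_\lambda$; moreover, using Parseval and the explicit $\sinh$ factor one gets $\|F\|_{L^2(\omega\times(-T,T))}\le C\,e^{CT\sqrt{\lambda}}\|u_\lambda\|_{L^2(\omega)}$ and $\|F\|_{H^1(\M\times(-T,T))}\le C\,e^{CT\sqrt{\lambda}}\|u_\lambda\|_{L^2(\M)}$.

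Next, the core step is an interpolation inequality for $(\p-\partial_t^2)$-harmonic functions, derived from a local Carleman estimate across the interface $S$. One chooses a weight $\varphi$ that is continuous across $S$ but whose normal derivative $\partial_\nu\varphi$ admits a prescribed jump adapted to the ratio $\cm/\cp$; with this choice the conjugated operator satisfies a H\"ormander-type sub-ellipticity on each side of $S$ and the boundary terms produced by integration by parts along $S$ can be absorbed using the transmission conditions. This is the technical heart of~\cite{LRR:10, le2013carleman}. Combining this Carleman estimate with a propagation-of-smallness argument from $\omega\times(-T,T)$ up to a neighborhood of $\M\times\{0\}$, one obtains an estimate of the form
$$
\|\partial_tF(\cdot,0)\|_{L^2(\M)}^2 \;\leq\; C\,e^{C\sqrt{\lambda}}\,\|F\|_{L^2(\omega\times(-T,T))}^2 \;+\; C\,e^{-c\sqrt{\lambda}}\,\|F\|_{H^1(\M\times(-T,T))}^2,
$$
with $c>0$; the $e^{-c\sqrt{\lambda}}$ gain in the last term comes from an optimisation in the Carleman parameter and is what distinguishes the Lebeau--Robbiano scheme from a plain interpolation inequality.

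Finally, I would plug the bounds from the first step into this inequality: the right-hand side becomes $C\,e^{C'\sqrt{\lambda}}\,\|u_\lambda\|_{L^2(\omega)}^2 + C\,e^{-c'\sqrt{\lambda}}\,\|u_\lambda\|_{L^2(\M)}^2$ once $T$ is chosen small enough that $2CT<c$, and the left-hand side equals $\|u_\lambda\|_{L^2(\M)}^2=\sum_{\lambda_j\le\lambda}|a_j|^2$ by Parseval. For $\lambda$ large the second term on the right is absorbed on the left, yielding the claimed inequality; small $\lambda$ is handled by a direct unique continuation argument on a finite-dimensional eigenspace. The main obstacle in this plan is the Carleman estimate across $S$: the weight cannot be taken smooth and the admissible jump on $\partial_\nu\varphi$ must be compatible with both the transmission conditions and the sub-ellipticity of the conjugated symbol, which is precisely the delicate analysis carried out in~\cite{LRR:10}. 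Once this estimate is available, the remainder of the argument is the classical Lebeau--Robbiano scheme.
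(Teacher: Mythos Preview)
The paper does not prove this theorem at all: it is quoted verbatim as Theorem~1.2 of~\cite{LRR:10} and used only as background, with the present paper's contribution being to show that the estimate is sharp via the whispering-gallery construction. There is therefore no ``paper's own proof'' to compare your proposal against.

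That said, your sketch is the correct outline of the Lebeau--Robbiano method as carried out in~\cite{LRR:10} (and generalised in~\cite{le2013carleman}): the elliptic lift via $\sinh(\sqrt{\lambda_j}\,t)/\sqrt{\lambda_j}$, the interface Carleman estimate with a weight having a prescribed normal-derivative jump adapted to $c_\pm$, the resulting interpolation inequality with an exponentially small remainder, and the absorption argument. One small point: the standard presentation usually yields an interpolation inequality of the form
\[
\|\partial_t F(\cdot,0)\|_{L^2(\M)} \;\le\; C\,\|F\|_{H^1(\M\times(-T,T))}^{1-\alpha}\,\|F\|_{L^2(\omega\times(-T,T))}^{\alpha}
\]
for some $\alpha\in(0,1)$, and the $e^{C\sqrt\lambda}$ loss then comes directly from plugging in the $e^{CT\sqrt\lambda}$ growth of the lift rather than from an explicit two-term estimate with an $e^{-c\sqrt\lambda}$ remainder; your formulation is an equivalent repackaging after optimisation in the Carleman parameter, so this is a matter of presentation rather than a gap.
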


The eigenfunctions exhibited in Theorem~\ref{thm generic} prove that the spectral estimate of Theorem~\ref{spectral lrr} above is sharp in general, even for a single eigenfunction.

The discontinuities of the operator $\p$ can be used to describe waves propagating in non-homogeneous media. The coefficient $c$ can be interpreted as the square of the speed of propagation. We consider the following system which describes the evolution of such a wave,

\begin{equation}
\label{system 1}
\begin{cases}
(\partial_t^2 \p )w=0 & \textnormal{in} \:(0,T) \times \dom\cup \domm \\
w_{|S_-}=w_{|S_+} & \textnormal{in}\: (0,T)\times S \\
(c\partial_\nu w)_{|S_-}=(c \partial_\nu w)_{|S_+} & \textnormal{in}\: (0,T)\times S \\
w=0 & \textnormal{in}\: (0,T)\times \partial\mathcal{M} \\
\left(w,\partial_t w \right)_{|t=0}=(w_0,w_1) & \textnormal{in} \: \mathcal{M},
\end{cases}
\end{equation}
where we denote by $\partial_{\nu}$ the outward unit normal vector to $S$ pointing into $\domm$ and by $w_{|S_{\pm}}$ the traces of $w_{|\Omega_{\pm}}$ on $S$. In unique continuation problems one tries to recover the whole wave from a partial observation. In~\cite{fQuantJump} the following quantitative unique continuation result is proved.

\begin{thm}[Theorem 1.3 in~\cite{fQuantJump}]
\label{thm for waves}
For any non empty subset $\omega \subset \M$ there exist $C, T>0$ such that for all $(w_0,w_1) \in H^1_0( \mathcal{M}) \times L^2(\mathcal{M})$ with $(w_0,w_1)\neq(0,0)$ and $w$ solution of~\eqref{system 1} one has:
\begin{align}
\label{fonction log}
 \norm{(w_0,w_1)}{H^1\times L^2}{}&\leq Ce^{C \Lambda}\norm{w}{L^2((0,T)\times \omega)}{}, \nonumber
\end{align}
where $\Lambda= \frac{\norm{(w_0,w_1)}{H^1\times L^2}{}}{\norm{(w_0,w_1)}{L^2 \times H^{-1}}{}}$. 
\end{thm}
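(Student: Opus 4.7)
The plan is to follow the Robbiano--Lebeau--Robbiano strategy for logarithmic stability of hyperbolic unique continuation, adapted to the transmission operator $\p$. No geometric control condition is assumed on $\omega$, so a logarithmic loss in the observability constant is expected and unavoidable; the object of the proof is to produce exactly such a loss rather than merely the qualitative unique continuation.

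First I would establish local Carleman estimates for the wave operator $\partial_t^2 + \p$ (or, after a Fourier--Bros--Iagolnitzer / holomorphic extension in the time variable, for an auxiliary elliptic operator of order two on $(-T,T)\times\M$). Away from the interface $S$ these are classical H\"ormander-type estimates with a weight $e^{\tau\varphi}$ for $\varphi$ satisfying a pseudoconvexity condition with respect to $\partial_t^2+\p$. Near and across $S$, one must construct two weights $\varphi_\pm$ defined respectively on $\dom$ and $\domm$, matching continuously at $S$ and such that, after integration by parts against a function satisfying the transmission conditions \eqref{trans cond}, the boundary terms produced at $S$ have a favorable sign. This is the technical heart of the argument and parallels the construction of Le Rousseau--Lebeau--Robbiano for elliptic transmission problems, with the added complication that the weights must also be pseudoconvex for the time variable.

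From these local Carleman estimates one deduces three-ball-type interpolation inequalities in space-time, which are then chained by a compactness / propagation-of-smallness argument to a global interpolation inequality of the form
\begin{equation*}
\norm{(w_0,w_1)}{H^1\times L^2}{} \leq C\,\norm{w}{L^2((0,T)\times\omega)}{\theta}\,\norm{(w_0,w_1)}{H^2\times H^1}{1-\theta},
\end{equation*}
valid for $T$ sufficiently large (and in fact for arbitrary $T>0$ after the FBI extension, since one works in an elliptic regime). The $H^2\times H^1$ norm on the right is controlled by $\sqrt{\lambda}$ times the $H^1\times L^2$ norm on the spectral side, which introduces a scale $\Lambda$ into the final estimate.

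Finally, the logarithmic estimate \eqref{fonction log} is obtained from this H\"older interpolation by a Phragm\'en--Lindel\"of type optimisation: writing the inequality for data truncated at every frequency level and optimising in a high-frequency cut-off yields the claimed $e^{C\Lambda}$ loss, with the ratio $\Lambda$ quantifying the effective regularity of the datum. The main obstacle is unambiguously the first step: the pseudoconvexity and positivity conditions on the two weights $\varphi_\pm$ near $S$ are not automatically compatible with \eqref{trans cond}, and one has to design $\varphi_\pm$ very carefully so that the boundary contribution at $S$ in the Carleman identity is nonnegative for all admissible traces. Once this transmission Carleman estimate is available, the propagation of smallness and the final Phragm\'en--Lindel\"of optimisation proceed along the now-classical Robbiano scheme.
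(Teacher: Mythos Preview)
This theorem is not proved in the present paper. It is quoted verbatim as ``Theorem~1.3 in~\cite{fQuantJump}'' and used only as background: the purpose of the paper is to show that the estimate of Theorem~\ref{thm for waves} is \emph{optimal} by constructing the whispering-gallery eigenfunctions of Theorem~\ref{thm generic}, not to reprove the quantitative unique continuation result itself. There is therefore no proof in the paper to compare your proposal against.

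That said, your outline is a reasonable high-level summary of the strategy carried out in~\cite{fQuantJump} (and, in the smooth-coefficient case, in~\cite{LL:19}): transmission Carleman estimates near $S$ with carefully matched weights $\varphi_\pm$, propagation of smallness via interpolation inequalities, and a final optimisation in frequency to convert the H\"older-type inequality into the $e^{C\Lambda}$ loss. One technical remark: the actual argument in that line of work does not quite proceed by a direct H\"older interpolation against an $H^2\times H^1$ norm followed by Phragm\'en--Lindel\"of; rather, the Carleman parameter $\tau$ is optimised directly, and the frequency ratio $\Lambda$ enters through the energy estimates linking the various Sobolev norms of the solution. But at the level of a sketch this distinction is minor. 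If your intent was to reconstruct the proof of the cited result, you are on the right track; if it was to supply the proof \emph{this} paper omits, note that the paper deliberately omits it because it lives in~\cite{fQuantJump}.
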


Theorem \ref{thm for waves} above generalizes Theorem 1.1 in~\cite{LL:19} where smooth coefficients are considered.
An important aspect of Theorem~\ref{thm for waves} above is that there is no assumption on the sign of the jump of the coefficient $c$. Suppose, to fix ideas, that $c_-<c_+$ are two constants. We interpret $c_-$ and $c_+$ as the square of the speed of propagation of a wave travelling through two isotropic media $\dom$ and $\domm$ with different refractive indices, $n_-$ and $n_+$ respectively (recall that $n_\pm=1/\sqrt{c_\pm}$). Imagine that a wave starts travelling from a region that is inside $\dom$. One has $\sqrt{\frac{c_-}{c_+}}=\frac{n_+}{n_-}$ and therefore the assumption $c_-<c_+$ translates to $n_->n_+$. Then Snell-Descartes law states that when a wave travels from a medium with a higher refractive index to one with a lower refractive index there is a critical angle from which there is \textit{total internal reflection}, that is no refraction at all. At the level of geometric optics, that is to say, in the high frequency regime such a wave stays trapped inside $\dom$. Therefore one expects that, at least at high frequency, no information propagates from $\dom$ to $\domm$, following the laws of geometric optics. However, Theorem~\ref{thm for waves} states that a part of the wave can always be observed from $\domm$ with an intensity at least exponentially small in terms of the typical frequency $\Lambda$ of the wave. 

In this note we show that indeed, in situations where $\frac{c_-}{c_+}$ is sufficiently small depending on the geometric context one can find waves that are exponentially localized, in the high frequency limit, arbitrarily close to the interface $S$. As a consequence, we deduce that the estimate of Theorem~\ref{thm for waves} is, in general, optimal. For a solution $w$ of \eqref{system 1} we define $\Lambda(w):=\frac{\norm{(w(0),\partial_t w(0))}{H^1\times L^2}{}}{\norm{(w(0),\partial_t w (0))}{L^2 \times H^{-1}}{}}$. We have the following corollary of Theorem~\ref{thm generic}:

\begin{thm}[Whispering-gallery waves]
    \label{optimal uc waves}
In the geometric setting of Theorem~\ref{thm generic} there exist solutions $(w_n)_{n \in \mathbb{N}}$ of \eqref{system 1} with 
$
\norm{w_n(0)}{L^2}{}=1
$
and such that for all $\omega \subset \M$ with $\textnormal{dist}(\overline{\omega},S)>0$ there exist $C, d>0$ with:
\begin{equation}
\label{estimate fleur}
\norm{w_n}{L^2((0,T) \times \omega )}{} \leq Ce^{-d \Lambda (w_n)}.    
\end{equation}

\end{thm}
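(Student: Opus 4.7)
The plan is to convert the elliptic eigenfunctions of Theorem~\ref{thm generic} into time-dependent wave solutions via the standard separation-of-variables ansatz. Given an eigenpair $(u_n,\lambda_n)$ produced by Theorem~\ref{thm generic}, I would define
\begin{equation*}
w_n(t,x) := \cos(\sqrt{\lambda_n}\,t)\, u_n(x).
\end{equation*}
Since $u_n$ satisfies $\p u_n = \lambda_n u_n$, the transmission conditions~\eqref{trans cond}, and the Dirichlet boundary condition, $w_n$ is a genuine solution of~\eqref{system 1} with Cauchy data $(w_n(0),\partial_t w_n(0)) = (u_n,0)$. The normalization $\norm{w_n(0)}{L^2(\M)}{} = 1$ is inherited directly from~\eqref{estimate coeur}.

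Next I would compute the frequency parameter $\Lambda(w_n)$ explicitly. Because $\partial_t w_n(0) = 0$, the denominator equals $\norm{u_n}{L^2(\M)}{} = 1$, while the numerator equals $\norm{u_n}{H^1(\M)}{}$. Applying Green's identity to the eigenvalue equation yields
\begin{equation*}
\int_\M c\, |\nabla u_n|^2 \, dx = \lambda_n \int_\M |u_n|^2 \, dx = \lambda_n,
\end{equation*}
and the uniform positivity and boundedness of $c$ then give $\norm{u_n}{H^1(\M)}{2} \asymp \lambda_n$ as $\lambda_n \to +\infty$. In particular there is a constant $C>0$, independent of $n$, such that $\Lambda(w_n) \leq C\sqrt{\lambda_n}$.

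Integrating in time and using the exponential decay in~\eqref{estimate coeur} gives
\begin{equation*}
\norm{w_n}{L^2((0,T)\times\omega)}{2} = \Big(\int_0^T \cos^2(\sqrt{\lambda_n}\,t)\, dt\Big)\, \norm{u_n}{L^2(\omega)}{2} \leq T\, C^2 e^{-2d\sqrt{\lambda_n}},
\end{equation*}
and combining this with $\sqrt{\lambda_n} \geq C^{-1}\Lambda(w_n)$ produces~\eqref{estimate fleur} after adjusting the constants $C$ and $d$. I do not anticipate any real obstacle here: the statement is essentially a direct reinterpretation of Theorem~\ref{thm generic} as an observability lower bound for the wave equation~\eqref{system 1}. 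The only technical point worth spelling out is the equivalence $\norm{u_n}{H^1(\M)}{} \asymp \sqrt{\lambda_n}$, which follows from the variational formulation and the ellipticity of $\p$.
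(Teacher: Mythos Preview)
Your proposal is correct and follows exactly the approach of the paper: define $w_n(t,x)=\cos(\sqrt{\lambda_n}\,t)\,u_n(x)$, observe that $(w_n(0),\partial_t w_n(0))=(u_n,0)$, and combine the eigenfunction decay~\eqref{estimate coeur} with $\Lambda(w_n)\asymp\sqrt{\lambda_n}$. The paper simply asserts $\Lambda(w_n)=\sqrt{\lambda_n}+1$ (implicitly using a spectral $H^1$-norm), whereas you spell out the equivalence via Green's identity and the uniform ellipticity of $c$; both justifications are valid and lead to~\eqref{estimate fleur} after adjusting constants.
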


Theorem~\ref{optimal uc waves} is an immediate consequence of Theorem~\ref{thm generic}. Indeed, take $u_n$ as in Theorem~\ref{thm generic}, then
$$
w_n(t,x):=\cos{(\sqrt{\lambda_n}t)}u_n(x).
$$
satisfies~\eqref{system 1} with $\left(w_n,\partial_t w_n \right)_{|t=0}=(u_n,0)$, $\Lambda(w_n)=\sqrt{\lambda_n}+1$ and \eqref{estimate coeur} implies \eqref{estimate fleur}, up to changing the constant $C$.

\bigskip

The plan of the article is as follows. In Section~\ref{domain of the op} we deal with the domain of the operator $\p$. Then in Section~\ref{the corona case section} we give a detailed proof of Theorem~\ref{thm generic} in the case of an annulus based on an Agmon estimate for a 1D semiclassical Schrödinger operator. Finally, in Section~\ref{other surfaces of revolution} we explain how our arguments can be used to include more general surfaces of revolution.

\subsection{Domain and self-adjointness of the operator}
\label{domain of the op}
Let us recall first some basic facts concerning the operator $\p$, its domain and some general spectral properties. Given a function $u=\mathds{1}_{\dom} u_-+\mathds{1}_{\domm}u_+$ with $u_\pm \in C^\infty( \mathcal{M}) $ one has in the distributional sense 
$$
\nabla u = \mathds{1}_{\dom} \nabla u_- +\mathds{1}_{\domm}\nabla u_+ +(u_--u_+)\delta_S \nu,
$$
where $\delta_S$ is the surface measure on $S$ and $\nu$ is the unit normal vector field pointing into $\domm$. We impose then that
\begin{equation}
\label{trans cond general 1}
u_-{_{|S}}=u_+{_{|S}},    
\end{equation}
and the singular term is removed. Similarly, calculating 
$$
\textnormal{div}(c(x)\nabla u),
$$
we see that the condition 
\begin{equation}
\label{trans cond general 2}
\cp \partial_{\nu} {u_{+}}_{|S}=\cm\partial_{\nu} {u_{-}}_{|S}
\end{equation}
combined with \eqref{trans cond general 1} gives the equality
$$
\textnormal{div}(c(x) \nabla u)=\mathds{1}_{\dom}\textnormal{div}(c_- \nabla u_- )+\mathds{1}_{\domm}\textnormal{div}( c_+ \nabla u_+).
$$
We define then $\mathcal{W}$ as the space of functions of the form 
\begin{equation*}
u=\mathds{1}_{\dom} u_-+\mathds{1}_{\domm}u_+, 
\end{equation*}
with $u_\pm \in C^\infty_0(\mathcal{M})$ and such that \eqref{trans cond general 1} and \eqref{trans cond general 2} hold. These conditions are called \textit{transmission conditions} and for $u \in \mathcal{W}$ one has $\p u \in L^2$. With $\W$ as initial domain $\p$ is symmetric and bounded from below. Indeed, writing $(\cdot,\cdot)$ for the inner product in $L^2$ one has for $u \in \mathcal{W}$
$$
(\p u,u)=-\int_{\dom} \text{div}(c_- \nabla u_-)\overline{u}-\int_{\domm}\text{div}(c_+ \nabla u_+)\overline{u}=\int_{\dom}c_- |\nabla u_- |^2 dx+\int_{\domm}c_+ |\nabla u_+ |^2 dx,
$$
where we have used the transmission conditions and an integration by parts.

One can then consider the Friedrichs extension (see for instance~\cite[Chapter 3.2.4]{LewinSpectrale}) of $\p$, which is a self-adjoint extension of $\p$ whose domain is given by:
$$
\mathcal{A}=\{u \in H^1_0(\M) | \p u \in L^2(\M) \}.
$$
Using elliptic regularity arguments (see eg~\cite[Appendix C.2]{LRLR:13}) one can see that in fact 
\begin{equation}
    \label{def of big domain}
    \mathcal{A}=\{u \in H^1_0(\M) \big \vert  u_{|\dom}\in H^2(\dom), \: u_{|\domm} \in H^2(\domm), \:   {c_+\partial_\nu(u_{|\dom}})_{|S}={c_-\partial_\nu(u_{|\domm}})_{|S}\}.
\end{equation}
In the sequel we shall denote by $u_{\pm}$ the restriction $u_{|\Omega_{\pm}}$ of $u \in \A$ on $\dom$ and $\domm$.

The operator $\p $ with domain $\mathcal{A}$ is then positive, self-adjoint and has a compact resolvent. We deduce that its spectrum solely consists of positive eigenvalues $0\leq\lambda_1\leq\lambda_2\leq...$ of finite multiplicity with $\lambda_j \rightarrow +\infty$.

\bigskip

\textbf{Acknowledgements} The author would like to thank C. Laurent and M. Léautaud for discussions, encouragements, and patient guidance.

\section{The case of an annulus}
\label{the corona case section}

We denote by $B_R$ the open ball in $\R^2$ of radius $R$ centered at $0$ and  study the eigenfunction problem for $(\mathcal{A
}, \p)$ in the following geometric context: Let $0<R_0<R_1<R_2$ and set $\M= \overline{B}_{R_2}\backslash B_{R_0} \subset \R^2, \dom=B_{R_1} \backslash \overline{B}_{R_0}, \domm= B_{R_2} \backslash \overline{B}_{R_1}, S=\overline{B}_{R_1} \backslash B_{R_1}$. For simplicity we assume $c$ piecewise constant, that is $c=\mathds{1}_{\dom}c_-+\mathds{1}_{\domm}c_+$, with $0<c_-<c_+$ to be chosen later on.

\begin{figure}
    \centering
    \begin{tikzpicture}
  
   \coordinate (O) at (0,0) ;
   \filldraw[color=black!60, fill=orange!10, ] circle (3);

   \filldraw[color=red!60, fill=orange!10,  very thick] circle (1.5);

  \filldraw [fill=white] circle (0.15);

 \draw[ color=white] circle (0.15);

 \draw(0,0)--(3,0)  node[pos=1,right] {$R_2$} ; 
  \draw(0,0)--(-1.06,-1.06)  node[pos=1,right] {$R_1$} ; 
  \draw(0,0)--(-0.12,0.12)  node[pos=1,left] {$R_0$} ; 

  \draw node at (3, 3)   {$\mathcal{M}$};

 \draw node at (1.3, 1.3)   {$ \color{red} S $};

\end{tikzpicture}

    \caption{The annulus $\M$ with the interface $S$ where the coefficient $c$ jumps.}
    \label{corona}
\end{figure}
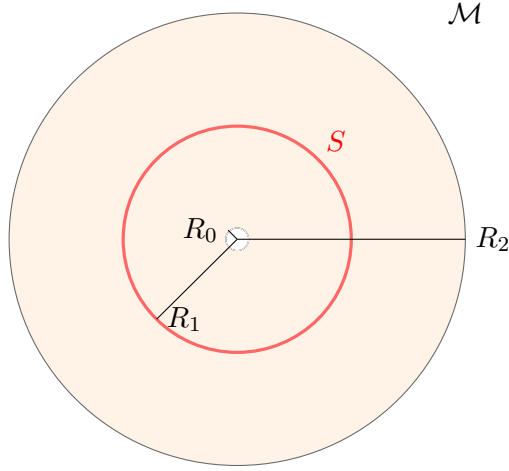

\subsection{Reduction to a semiclassical Schrödinger operator }

We work in polar coordinates, in which the operator $\p$ takes the form 
$$\p=-\frac{1}{r} \partial_r(cr \partial_r)+\frac{c}{r^2}\partial^2_\theta.
$$
For a function $u \in \A$ we have (see Section~\ref{domain of the op}):
$$
\p u(r,\theta)=-\sum_{\pm}\mathds{1}_{\Omega_{\pm}}\left(c_{\pm}\partial^2_r u_\pm +\frac{c_{\pm}}{r}\partial_r u_\pm +\frac{c_{\pm}}{r^2}\partial^2_\theta u_\pm\right),\quad u \in \A,
$$
where we recall that we write $u=\mathds{1}_{\dom}u_-+\mathds{1}_{\domm}u_+$.
We look for solutions of the eigenvalue problem $\p u=\lambda u$ 
 under the form $u(r,\theta)=e^{i n \theta}f(r)$ which yields
\begin{equation*}
    \label{1d equation}
    \sum_{\pm}  \mathds{1}_{\Omega_\pm}\left(-\frac{c_\pm}{n^2}\left(\partial^2_r f_\pm(r)+\frac{1}{r}\partial_r f_\pm(r)\right)+\left(\frac{c_\pm}{r^2}- \frac{\lambda}{n^2}\right)f_\pm(r)\right)=0.
\end{equation*}
We consider large angular momenta, $n \rightarrow + \infty$ and interpret $h:=1/n$ as a semiclassical parameter. Equation \eqref{1d equation} writes then as $P_h f=E_h f$,
where
\begin{equation}
    \label{def of Ph}
    P_h:=-h^2\frac{1}{r} \partial_r(cr \partial_r)+V_c(r), \quad V_c(r):=\frac{c}{r^2}, \quad E_h=h^2 \lambda,
\end{equation}
 and $V_c(r)$ can be seen as the effective potential with a jump discontinuity at $r=R_1$. We define then the continuous extension of $V_c$ to $[R_0,R_1]$:
\begin{equation}
\label{def of extension V}
\vc(r)=
\begin{cases}
    V_c(r),&\quad r\in [R_0,R_1) \\
    \frac{c_-}{R^2_1}, &\quad r=R_1,
\end{cases}
\end{equation}
and we write $E_0:=\frac{c_-}{R^2_1}$ for the infimum of the potential $V_c$.

We now study the eigenvalue problem for the one dimensional semiclassical  Schrödinger operator $P_h$.
We define the space
\begin{align}
    \label{def of Ar}
\A_r=\{f \in H^1_0\left((R_0, R_2)\right) |& f_{|(R_0,R_1)} \in H^2((R_0,R_1)), \:  f_{|(R_1,R_2)} \in H^2((R_1,R_2)), \nonumber \\
& \cp \partial_{r} ({f_{|(R_0,R_1)}})(R_1)=\cm\partial_{r}( {f_{|(R_1,R_2)}})(R_1)\}.
\end{align}
We write as well $f_-:=f_{|(R_0,R_1)}$ and $f_+:=f_{|(R_1,R_2)}$. The space $\A_r$ is such that one has the following two properties:
$$
f \in \A_r \Leftrightarrow f(r)e^{ in \theta} \in \A, \quad n \in \mathbb{Z},
$$
and $P_h$ is self-adjoint with domain $\A_r$, for the same reason as in Section~\ref{domain of the op}. 

In the following proposition we show the existence of eigenfunctions for $P_h$ close to some energy levels. For the proof we start by constructing a rough quasimode and then use the self-adjointness of the operator (see for instance \cite[Lemma 3.6]{LL:18}, \cite[Chapter 12.5]{Zworski:book}).

\begin{prop}[Existence of eigenfunctions for the 1D operator]
\label{existence of eigen 1d}
There exists $h_0$ such that for all $E \in [E_0,V_c(R_0)) $ there exists $C>0$ such that for all $h \in (0,h_0]$, there exist $E_h$ and $\psi_h \in \A_r$ with $$P_h\psi_h=E_h \psi_h, \quad |E_h-E|\leq Ch^{2/3}.$$
\end{prop}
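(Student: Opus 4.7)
The plan is to build a family of quasimodes densely filling $[E_0, V_c(R_0))$ and extract true eigenvalues via self-adjointness. Since $P_h$ is self-adjoint on $\A_r$ with compact resolvent (by the same elliptic argument as for $\p$ in Section~\ref{domain of the op}), every quasimode pair $(\tilde{\psi}^{(N)}_h, E^{(N)}_h) \in \A_r\times\mathbb{R}$ satisfying $\|(P_h - E^{(N)}_h)\tilde{\psi}^{(N)}_h\|_{L^2} \leq Ch^{2/3}\|\tilde{\psi}^{(N)}_h\|_{L^2}$ yields via the spectral theorem an eigenvalue $\mu^{(N)}_h$ at distance at most $Ch^{2/3}$ from $E^{(N)}_h$, with eigenfunction in $\A_r$. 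If the discrete set $\{E^{(N)}_h\}$ turns out to be $O(h)$-dense in $[E_0, V_c(R_0))$, then for every prescribed $E$ one has $|\mu^{(N)}_h - E| \leq O(h) + Ch^{2/3} \leq C' h^{2/3}$ for a well-chosen $N = N(E,h)$ and $h$ small, which is the content of the proposition.

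For each candidate energy $E \in (E_0, V_c(R_0))$ the classical geometry has a smooth turning point $r_E := \sqrt{\cm/E} \in (R_0, R_1)$ for the effective potential $V_c(r) = \cm/r^2$ on the inner interval, while choosing $\cp$ large enough that $\cp/R_1^2 > V_c(R_0)$ renders $(R_1, R_2)$ entirely classically forbidden. On the allowed region I use the standard WKB ansatz $\tilde{\psi}^-_h(r) = k(r)^{-1/2}\sin(h^{-1}\int_{r_E}^r k\,ds + \pi/4)$ with $k = \sqrt{(E-V_c)/\cm}$, connected at $r_E$ to an exponentially decaying tail on $(R_0, r_E)$ through the classical Airy matching. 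On the forbidden region I take $\tilde{\psi}^+_h(r) = A(E)\,\kappa(r)^{-1/2}\exp(-h^{-1}\int_{R_1}^r \kappa\,ds)$ with $\kappa = \sqrt{(V_c-E)/\cp}$. Continuity at $R_1$ fixes $A(E)$; the remaining derivative transmission condition $\cp\partial_r\tilde{\psi}^-_h(R_1) = \cm\partial_r\tilde{\psi}^+_h(R_1)$ reduces to a Bohr--Sommerfeld-type quantization of the form $h^{-1}\int_{r_E}^{R_1} k\,ds = N\pi + \gamma(E) + O(h)$, where $\gamma(E)$ is an explicit $O(1)$ phase built from $\cm, \cp, k(R_1), \kappa(R_1)$. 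Since the left side varies smoothly in $E$ with nonzero derivative, the solution set $\{E^{(N)}_h\}$ forms a sequence of spacing $O(h)$ uniformly covering $[E_0, V_c(R_0))$ for $h$ small. Smooth cutoffs near $R_0$ and $R_2$ enforce the Dirichlet conditions at an exponentially small cost, since $\tilde{\psi}^{(N)}_h$ is already exponentially small at those endpoints.

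The residual $(P_h - E^{(N)}_h)\tilde{\psi}^{(N)}_h$ then decomposes into the pointwise $O(h^2)$ WKB error on the smooth interiors, an $L^2$ contribution of order $h^{2/3}$ from the Airy layer of width $h^{2/3}$ around $r_E$, and exponentially small cutoff terms; the transmission conditions being enforced exactly removes any singular contribution at $R_1$. The $L^2$ norm of $\tilde{\psi}^{(N)}_h$ is bounded below by a positive constant from the oscillating piece, producing the required $Ch^{2/3}$ bound. The main analytic obstacle is the Airy connection at $r_E$, which both sets the $h^{2/3}$ rate and is the only delicate uniform estimate. The degenerate case $E = E_0$, for which $r_E = R_1$ and the oscillating interval collapses, requires a separate pure-Airy quasimode localized in an $h^{2/3}$-neighborhood of $R_1$ (with the right tail again tuned to the transmission conditions); since $C$ is permitted to depend on $E$, this borderline case can be isolated from the main WKB construction.
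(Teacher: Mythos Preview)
Your approach is correct in outline but far more elaborate than the paper's. The paper bypasses WKB and Airy matching entirely: for each $E$ it sets $\rho_E = \vc^{-1}(E) \in (R_0, R_1]$ and plants a single bump $f_h(r) = h^{-1/3}\chi\bigl(h^{-2/3}(r-\rho_E)\bigr)$ with $\chi \in C^\infty_0((-1,0))$, so that $\supp f_h \subset (R_0, R_1)$ for small $h$. This lies in $\A_r$ for free (the transmission and Dirichlet conditions are vacuous for functions compactly supported in the open subinterval), and the $h^{2/3}$ quasimode error drops out of two direct estimates: $\|c h^2(\partial_r^2 + r^{-1}\partial_r) f_h\|_{L^2} = O(h^{2/3})$ from the scaling, and $\|(V_c - E)f_h\|_{L^2} = O(h^{2/3})$ from the Lipschitz bound $|V_c - E| \leq C|r - \rho_E| \leq Ch^{2/3}$ on the support. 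Self-adjointness then yields the eigenvalue exactly as in your final step. Your construction delivers much finer information --- Bohr--Sommerfeld spacing, genuine WKB profiles of the eigenfunctions --- but none of this is used downstream: the Agmon estimate of Proposition~\ref{agmon estimate 1d} applies to \emph{any} eigenfunction near the right energy, so bare existence suffices. Two further points of comparison: your argument quietly assumes $\cp/R_1^2 > V_c(R_0)$ so that $(R_1,R_2)$ is entirely classically forbidden, a hypothesis the paper's bump-function proof does not need; and the endpoint $E = E_0$ (where $\rho_E = R_1$) is handled uniformly by the same bump, now sitting in $(R_1 - h^{2/3}, R_1)$, rather than requiring the separate pure-Airy construction you sketch.
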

\begin{proof}
The operator $P_h$ with domain $\A_r$ is self-adjoint in $L^2((R_0,R_2),rdr)$. Note that we can estimate indifferently with norms in $L^2((R_0,R_2),rdr)$ or $L^2((R_0,R_2),dr)$ since they are equivalent.

We may write $E=\vc(\re)$ for some $\re \in (R_0,R_1]$ where we recall that $\vc$ is defined in~\eqref{def of extension V}. Consider $\chi \in C^\infty_0((-1,0))$ such that $\chi=1$ in a neighborhood of $-1/2$ and define $f_h(r)=h^{-1/3}\chi\left(h^{-2/3}(r-\rho_E)\right)$. One then has that for $h\leq h_0$ with $h_0$ sufficiently small $f_h(r)\in C^{\infty}_0((R_0,R_1))$. This implies in particular that $f_h \in \A_r$. We estimate now:
\begin{align}
\label{estimate A}
    &\norm{-ch^2\left(\partial^2_r f_h(r)+\frac{1}{r} \partial_r f_h (r)\right)}{L^2}{2} \nonumber \\
    &= \int ch^4 \left(h^{-5/3}\chi^{\prime \prime}\left(h^{-2/3} (r-\rho_E)\right)+\frac{1}{r}h^{-1}\chi^{\prime}\left(h^{{-2/3}}(r-\rho_E)\right)\right)^2 dr \nonumber \\
    &\leq C h^{4/3}.
\end{align}
To estimate the next term we use that $\supp(f_h) \subset (R_0,R_1)$. We use now the fact that $V_c$ is uniformly Lipschitz in $[R_0,R_1]$ which implies that, in the support of $f_h$, one has 
$$
|V_c(r)-E|=|\overline{V}_c(r)-E|\leq C|r-\re|,
$$
and hence
\begin{align}
\label{estimate B}
\norm{\left(V_c(r)-E\right)f_h(r)}{L^2}{2}\leq C \int (r-\rho_E)^2h^{-2/3}\chi^2\left(h^{-2/3}(r-\rho_E)\right)dr\leq Ch^{4/3}.
\end{align}
Putting together \eqref{estimate A} and \eqref{estimate B} yields  
\begin{align}
\label{first estimate for quasimode}
    \norm{\left(P_h-E\right)f_h}{L^2}{}\leq C h^{2/3}.
\end{align}
Noticing finally that
\begin{align*}
    \norm{f_h}{L^2}{2}=\int h^{-2/3}\chi^2\left(h^{-2/3}(r-\rho_E)\right)dr=c_0,
\end{align*}
we can write~\eqref{first estimate for quasimode} as $ \norm{\left(P_h-E\right)f_h}{L^2}{}\leq C h^{2/3}\norm{f_h}{L^2}{}$. If now $E \notin \text{Sp}(P_h)$ one can write
\begin{align*}
  \norm{\left(P_h-E\right)f_h}{L^2}{}&\leq C h^{2/3}\norm{f_h}{L^2}{}=Ch^{2/3}\norm{\left(P_h-E\right)^{-1}\left(P_h-E\right)f_h}{}{} \\
   &\leq Ch^{2/3}\norm{\left(P_h-E\right)^{-1}}{L^2\rightarrow L^2}{} \norm{\left(P_h-E\right)f_h}{L^2}{},
  \end{align*}
which gives
$$
\norm{\big(P_h-E)^{-1}}{L^2\rightarrow L^2}{}\geq C^{-1}h^{-2/3}.
$$
Finally, since $P_h$ is self-adjoint one has $\norm{\big(P_h-E)^{-1}}{L^2\rightarrow L^2}{}=\frac{1}{\text{dist}(\text{Sp}(P_h),E)}$ and consequently
\begin{equation}
    \label{distance from the spectrum}
\text{dist}(\text{Sp}(P_h),E)\leq Ch^{2/3},
\end{equation}
which is trivially true in the case $E \in\text{Sp}(P_h) $ as well. The existence of $E_h, \psi_h$ is then a result of \eqref{distance from the spectrum} and of the fact that the spectrum of $P_h$ consists solely of eigenvalues.
\end{proof}

\subsection{The Agmon estimate}
\label{section agmon estimate 1d}  

We follow~\cite[Chapter 3]{Helffer:booksemiclassic} (see as well~\cite[Chapter 6.B]{DS:book}, \cite{LL:22}). We start by defining the appropriate Agmon distance, which corresponds to a distance to the \textit{classically allowed region} for the potential $V_c$ at the energy level $E$. The classically allowed region $K_E$ is defined as
\begin{equation}
    \label{def of class allowed}
K_E=\{r \in [R_0,R_2]| \: V_c(r) \leq E \}, 
\end{equation}
where we recall that $V_c(r)=\frac{c}{r^2}$ and $c=\mathds{1}_{(R_0,R_1)}c_-+\mathds{1}_{(R_1,R_2)}c_+$. 
The condition we impose on the coefficient $c$ is then (see Figure~\ref{graphs}):
\begin{equation}
    \label{condition on c}
  0<c_-<c_+, \quad  \vc(R_1)<V_c(R_2),
\end{equation}
and as a consequence the function $\Tilde{V}$ defined as $\Tilde{V}=V_c(r)$ for $r\in [R_0,R_2] \backslash\{R_1\}$ and  $\Tilde{V}(R_1)=E_0$ attains its minimum at $r=R_1$. We define the appropriate Agmon distance for $E\geq E_0$ as
\begin{equation}
    \label{Agmon distance}
    d_{A,E}(r)=\underset{y \in K_E}{\text{inf}}\left| \int_{y}^{r} \sqrt{\frac{(V_c(s)-E)_+}{c(s)}}ds \right|,
\end{equation}
where $a_+=\text{max}(a,0)$. 

In the sequel we shall focus on energy levels situated close to the minimum $E_0$. We remark that assumption~\eqref{condition on c} and continuity of $\vc$ in $[R_0,R_1]$ imply that there exists $\eta_0>0$ such that
\begin{equation}
    \label{prop of eta}
   K_{E_0+\eta} \subset [R_0,R_1], \quad  \forall \: 0< \eta < \eta_0.
\end{equation}
For $E\in [E_0, E_0+\eta]$, $\eta\leq\eta_0$ we can define $\re:= V_c^{-1}(E)$ (which is invertible in $[R_0,R_1]$) and obtain that $K_E=[\re,R_1]$ as well as the following explicit expressions:
\begin{align}
    d_{A,E}(r)&=\int_{r}^{\re} \sqrt{\frac{(V_c(s)-E)}{c_-(s)}}ds, \quad \text{for } r \in [R_0,\re], \nonumber\\
    d_{A,E}(r)&=0, \quad  \text{for } r\in[\re,R_1], \\
  \nonumber  d_{A,E}(r)&=\int_{R_1}^{r}\sqrt{\frac{(V_c(s)-E)}{c_+(s)}}ds, \quad  \text{for } r\in [R_1,R_2].
\end{align}
One has in particular that $d_{A,E}$ is $C-$Lipschitz with with $C= \left(\frac{\textnormal{max}\{V_c(s)-E\}}{c_-}\right)^{1/2}$.

The following identity is the key ingredient of the Agmon estimate. Notice that all quantities appearing in the following lemma are well defined. Indeed, since $f \in \A_r$ one has $f \in H^1$ and the left hand side is well defined. 

\begin{lem}
    \label{ipp identity}
    Let $\phi$ be real valued Lipschitz continuous on $[R_0,R_2]$ and $f\in \A_r$. Then one has:
  \begin{multline*}
    \int_{R_0}^{R_2} c h^2\left|\partial_r(e^{\phi/h}f)  \right|^2 rdr-\int_{R_0}^{R_2}c|\partial_r \phi|^2 e^{2 \phi/h}|f|^2 rdr\\
    = -\operatorname{Re}\int_{(R_0,R_1)\cup (R_1,R_2)}  e^{2\phi/h} c h^2\left(\partial^2_r+\frac{1}{r}\partial_r\right)f\cdot\bar{f}rdr.
  \end{multline*}  
\end{lem}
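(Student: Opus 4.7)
The identity is a one-dimensional weighted Agmon-type integration by parts. My plan is to expand the square on the left-hand side algebraically, then integrate by parts on the right using $r(\partial_r^2 + r^{-1}\partial_r)f = \partial_r(r\partial_r f)$, and finally verify that the boundary contributions vanish, thanks to the Dirichlet conditions at $R_0, R_2$ and the transmission conditions at $R_1$.

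Concretely, I would first write $\partial_r(e^{\phi/h}f) = e^{\phi/h}(h^{-1}\partial_r\phi\cdot f + \partial_r f)$ (legitimate since $\phi$ is Lipschitz, hence a.e.\ differentiable and $\partial_r\phi\in L^\infty$) and expand
\[
\bigl|\partial_r(e^{\phi/h}f)\bigr|^2 = e^{2\phi/h}\Bigl(h^{-2}(\partial_r\phi)^2|f|^2 + 2h^{-1}\partial_r\phi\,\operatorname{Re}(\bar f\partial_r f) + |\partial_r f|^2\Bigr).
\]
Multiplying by $ch^2 r$ and subtracting $c|\partial_r\phi|^2 e^{2\phi/h}|f|^2 r$, the $(\partial_r\phi)^2|f|^2$ contribution is killed exactly, so the left-hand side of the lemma reduces to
\[
\int_{R_0}^{R_2}\Bigl[\,2ch\,\partial_r\phi\, e^{2\phi/h}\operatorname{Re}(\bar f\partial_r f) + ch^2 e^{2\phi/h}|\partial_r f|^2\,\Bigr]\, r\,dr.
\]

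For the right-hand side, I would rewrite $r(\partial_r^2 + r^{-1}\partial_r)f_\pm = \partial_r(r\partial_r f_\pm)$ on each smooth piece, and then integrate by parts on $(R_0,R_1)$ and $(R_1,R_2)$ separately, where $f_\pm \in H^2$ by membership in $\A_r$. Letting the derivative fall on $e^{2\phi/h}\bar f_\pm$ produces two terms whose sum, after taking the real part, is exactly the integrand displayed just above. The bulk matches, so it only remains to verify the vanishing of the boundary contributions.

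At the endpoints $R_0$ and $R_2$, the Dirichlet condition $f(R_0)=f(R_2)=0$ built into $\A_r$ kills the corresponding boundary terms. The two one-sided contributions at the interface $R_1$ combine (with opposite signs coming from $R_1$ being the upper endpoint of $(R_0,R_1)$ and the lower endpoint of $(R_1,R_2)$) into
\[
-\operatorname{Re}\Bigl\{R_1 h^2 e^{2\phi(R_1)/h}\bigl[c_-\bar f_-(R_1)\partial_r f_-(R_1) - c_+\bar f_+(R_1)\partial_r f_+(R_1)\bigr]\Bigr\},
\]
which vanishes thanks to the two transmission conditions encoded in $\A_r$: continuity of $f$ at $R_1$ gives $\bar f_-(R_1) = \bar f_+(R_1)$, and continuity of the flux $c\,\partial_r f$ makes the bracket factor as $\bar f(R_1)\bigl[c_-\partial_r f_-(R_1) - c_+\partial_r f_+(R_1)\bigr] = 0$. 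The only real point to watch is this sign bookkeeping at $R_1$, so that the cancellation via the transmission conditions comes out clean; everything else is a direct expansion plus one integration by parts.
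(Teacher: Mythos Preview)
Your proof is correct and follows essentially the same route as the paper: integrate by parts on each subinterval using $r(\partial_r^2+r^{-1}\partial_r)f_\pm=\partial_r(r\partial_r f_\pm)$, kill the interior boundary terms via the transmission conditions and the endpoint terms via Dirichlet, and take real parts. The only cosmetic difference is that you first expand $|\partial_r(e^{\phi/h}f)|^2$ to identify the target expression, whereas the paper performs the integration by parts directly and leaves that expansion implicit in its final sentence ``which gives the sought identity after taking real parts.''
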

\begin{proof}
We write $f=\mathds{1}_{(R_0,R_1)}f_-+\mathds{1}_{(R_1,R_2)}f_+$, split the integrals and integrate by parts. We have:
\begin{align}
\label{estimate one}
    -\int_{R_0}^{R_1} c_- e^{2\phi/h}\left(\partial^2_r + \frac{1}{r}\partial_r\right)f_-&  \cdot\bar{f}_-rdr =-\int_{R_0}^{R_1} c_-\partial_r(r\partial_r f_-)  e^{2\phi/h} \cdot\bar{f}_-dr  \nonumber\\
    &=\int_{R_0}^{R_1}c_-\partial_r f_-\partial_r(e^{2\phi/h} \bar{f}_-)rdr-c_-R_1\partial_rf_-(R_1)e^{2\phi/h} \bar{f}_-(R_1),
\end{align}
and similarly
\begin{align}
\label{estimate two}
    -\int_{R_1}^{R_2}c_+ e^{2\phi/h}\left(\partial^2_r + \frac{1}{r}\partial_r\right)f_+ &  \cdot\bar{f}_+rdr =-\int_{R_1}^{R_2}c_+\partial_r(r\partial_r f_+)  e^{2\phi/h} \cdot\bar{f}_+dr \nonumber \\
    &=\int_{R_1}^{R_2}c_+ \partial_r f_+\partial_r(e^{2\phi/h} \bar{f}_+)rdr+c_+R_1\partial_rf_+(R_1)e^{2\phi/h} \bar{f}_+(R_1).
\end{align}
Now by definition of the space $\A_r$ in~\eqref{def of Ar} we find that 
$$
c_-R_1\partial_rf_-(R_1)e^{2\phi/h} \bar{f}_-(R_1)=c_+R_1\partial_rf_+(R_1)e^{2\phi/h} \bar{f}_+(R_1).
$$
Adding \eqref{estimate one} and \eqref{estimate two} the boundary terms cancel out and we obtain
\begin{multline*}
   -\int_{(R_0,R_1)\cup (R_1,R_2)} c e^{2\phi/h}h^2\left(\partial^2_r+\frac{1}{r}\partial_r\right)f \cdot\bar{f}rdr\\
   =\int_{R_0}^{R_2} c 2h \partial_r \phi e^{2 \phi/h} \bar{f} \partial_rf rdr+ \int_{R_0}^{R_2} c h^2 e^{2\phi/h}|\partial_r f|^2 rdr,
\end{multline*}
which gives the sought identity after taking real parts.
\end{proof}

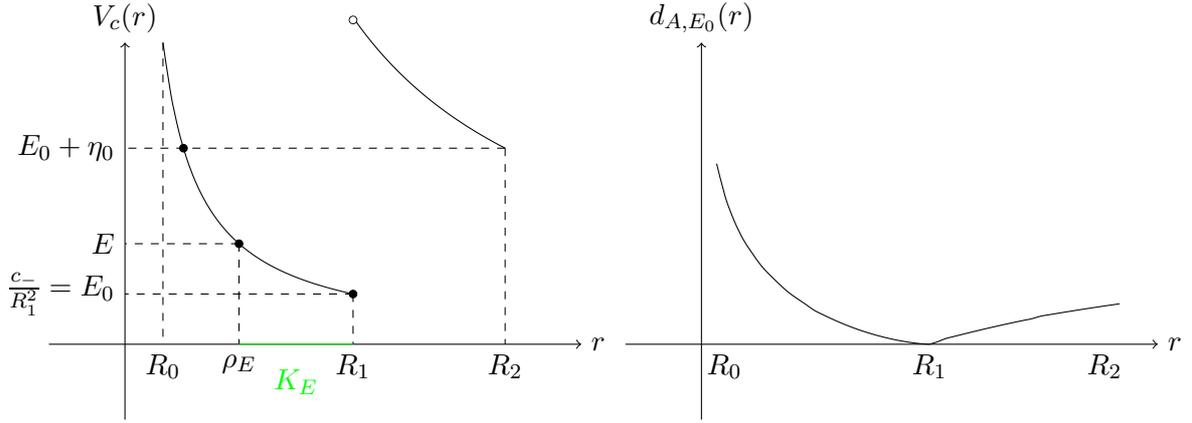
\begin{figure}

    \centering
    \begin{tikzpicture}
    \draw[->] (-1, 0) -- (6, 0) node[right] {$r$};
  \draw[->] (0, -1) -- (0, 4) node[above] {$V_c(r)$};

  \draw[ domain=0.5:3, smooth, variable=\x] plot ({\x}, {2/\x )});
  \draw[ domain=3:5, smooth, variable=\x] plot ({\x}, {13/\x});

  \draw [dashed] (3,0.666) -- (3,0) node [below]{$R_1$}; 
     \filldraw[black] (3,0.666) circle(0.05);

      \filldraw[black] (3,4.3) circle(0.05);
            \filldraw[white] (3,4.3) circle(0.038);

\draw [dashed] (3,0.666) -- (0,0.666) node [left]{$\frac{c_-}{R^2_1}=E_0$}; 
 \draw [dashed] (0.5,4) -- (0.5,0) node [below]{$R_0$}; 
  \draw [dashed] (5,2.6) -- (5,0) node [below]{$R_2$}; 

 \filldraw[black] (0.77,2.6) circle(0.05);

  \draw [dashed] (5,2.6) -- (0, 2.6)node [left]{$E_0+\eta_0$}; 

  \filldraw[black] (1.5,1.333) circle(0.05);

  \draw [dashed] (1.5,1.333) -- (1.5, 0)node [below]{$\re$}; 

   \draw [dashed] (1.5,1.333) -- (0, 1.333)node [left]{$E$}; 

   \draw [green] (1.5, 0) -- (3,0) ; 

   \draw (2.25,-0.5)  node {$\color{green} K_E$};
  
    \end{tikzpicture}
\begin{tikzpicture}

\draw[->] (-1, 0) -- (6, 0) node[right] {$r$};
  \draw[->] (0, -1) -- (0, 4) node[above] {$d_{A,E_0}(r)$};

\draw (3,0) node [below] {$R_1$}; 

\draw  (0.3,0) node [below] {$R_0$}; 

 \draw[ domain=0.2:3, smooth, variable=\x] plot ({\x}, {-sqrt(1-\x*\x/9)+arcsinh(sqrt(1-\x*\x/9))});

 \draw[ domain=3:5.5, smooth, variable=\x] plot ({\x}, {sqrt(1-\x*\x/81)-arcsinh(sqrt(1-\x*\x/81))+0.819938});

   \draw  (5.3,0) node [below]{$R_2$};

\end{tikzpicture}
    
    \caption{Left: The potential $V_c$ with the points of interest. For $E < E_0 +\eta_0=\frac{c_+}{R^2_2}$ the potential is continuous and injective in the classically allowed region $K_E$ which is in green. 
    Right: The graph of the Agmon distance related to the minimal energy level $E_0$. Notice the $C^1$ singularity at $r=R_1$ due to the jump of the coefficient $c$. }
    \label{graphs}

\end{figure}

We can now show the following Agmon estimate. We recall that $\eta_0$ is defined by \eqref{prop of eta}.

\begin{prop}
\label{agmon estimate 1d}
 Let $E \in [E_0, E_0+\frac{\eta_0}{2}]$ and $\epsilon(h)$ with $\epsilon(h)\overset {h \rightarrow 0}{ \longrightarrow }0$. Then for all $\delta>0$, there exist $C$, $h_0$ such that for all $\psi_h$ satisfying
$$
P_h \psi_h=(E+\epsilon(h)) \psi_h,\quad \norm{\psi_h}{L^2}{}=1,
$$
 one has, for $h\leq h_0$:
\begin{equation*}
    \norm{ h \partial_r \left(e^{\frac{d_{A,E}}{h}}  \ph\right)}{L^2((R_0,R_2))}{}+\norm{e^{\frac{d_{A,E}}{h}} \ph}{L^2((R_0,R_2))}{} \leq C e^{\delta/h}.
\end{equation*}

\end{prop}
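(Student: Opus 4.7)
The plan is to apply the integration-by-parts identity of Lemma~\ref{ipp identity} with a Lipschitz weight adapted to the eigenvalue equation, and to exploit the eikonal relation $c|\partial_r d_{A,\mu}|^2 = (V_c - \mu)_+$ encoded in the definition~\eqref{Agmon distance}. To absorb the $h$-dependent shift $\epsilon(h)$ appearing in the eigenvalue equation, I would work with the shifted weight $\phi := d_{A,\,E+\epsilon(h)}$ rather than with $d_{A,E}$ itself. This weight is well defined because positivity of $P_h$ in $L^2((R_0,R_2), r\,dr)$ together with $V_c \geq E_0$ forces $E+\epsilon(h) \geq E_0$. From the explicit formulas in~\eqref{Agmon distance} one checks that $d_{A,\,E+\epsilon(h)} \to d_{A,E}$ uniformly on $[R_0,R_2]$ as $h\to 0$, so that for any fixed $\delta>0$ and all sufficiently small $h$,
\[
e^{d_{A,E}(r)/h} \leq e^{\delta/h}\, e^{\phi(r)/h} \qquad \text{uniformly in } r \in [R_0,R_2].
\]
It is therefore enough to prove the estimate with $\phi$ in the exponential and a right-hand side that is at most polynomial in $h^{-1}$.

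Applying Lemma~\ref{ipp identity} to $f=\psi_h$ with this $\phi$ and using the eigenvalue equation $-ch^2(\partial_r^2 + r^{-1}\partial_r)\psi_h = (E + \epsilon(h) - V_c)\psi_h$ on each piece to rewrite the right-hand side, I would rearrange the identity as
\[
\int_{R_0}^{R_2} ch^2 \bigl|\partial_r (e^{\phi/h}\psi_h)\bigr|^2 r\,dr \;+\; \int_{R_0}^{R_2} \bigl(V_c - E - \epsilon(h) - c|\partial_r\phi|^2\bigr)\,e^{2\phi/h}|\psi_h|^2\,r\,dr \;=\; 0.
\]
The eikonal identity $c|\partial_r\phi|^2 = (V_c - E - \epsilon(h))_+$ makes the bracketed integrand vanish on the classically forbidden region $\{V_c > E+\epsilon(h)\}$, while on $K_{E+\epsilon(h)}$ one has $\phi \equiv 0$ and the integrand reduces to $V_c - E - \epsilon(h) \leq 0$. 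Setting $u := e^{\phi/h}\psi_h$, this yields the clean bound
\[
\int_{R_0}^{R_2} ch^2 |\partial_r u|^2\, r\,dr \;=\; \int_{K_{E+\epsilon(h)}} (E + \epsilon(h) - V_c)\,|\psi_h|^2\, r\,dr \;\leq\; C,
\]
using $\|\psi_h\|_{L^2}=1$ and the uniform bound on $V_c$.

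To upgrade this to an $L^2$ bound on $u$ itself, I would use the Dirichlet boundary conditions $\psi_h(R_0)=\psi_h(R_2)=0$, which imply $u(R_0)=u(R_2)=0$. The one-dimensional Poincaré inequality on $(R_0,R_2)$ then gives $\|u\|_{L^2} \leq C\|\partial_r u\|_{L^2} \leq C/h$. Since $h^{-1} \leq C_\delta e^{\delta/h}$ for all sufficiently small $h$, combining with the pointwise comparison of weights from the first step yields
\[
\bigl\|h\partial_r(e^{d_{A,E}/h}\psi_h)\bigr\|_{L^2} + \bigl\|e^{d_{A,E}/h}\psi_h\bigr\|_{L^2} \leq C e^{2\delta/h},
\]
and the arbitrariness of $\delta$ gives the announced estimate.

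The main delicate point is the interaction between the $\epsilon(h)$-error in the eigenvalue equation and the sharp Agmon weight: if one naively takes $\phi = d_{A,E}$, the identity leaves a residual contribution $\epsilon(h)\int e^{2\phi/h}|\psi_h|^2\,r\,dr$ which cannot be absorbed globally, because the positive gain produced by a slightly smaller weight degenerates to zero at the boundary of $K_E$ where $V_c = E$. Shifting the reference energy to $E+\epsilon(h)$ makes the eikonal relation exact for the actual eigenvalue and eliminates this residual term, at the modest cost of a uniform-convergence argument which is harmless given the $e^{\delta/h}$ slack in the statement. A secondary technical point is the use of Lemma~\ref{ipp identity} with a merely Lipschitz (not smooth) weight, which is justified by a standard approximation argument since the identity depends only on $\partial_r \phi \in L^\infty$.
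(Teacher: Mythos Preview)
Your argument is correct and takes a genuinely different route from the paper's proof. Both start from the identity of Lemma~\ref{ipp identity}, but the paper chooses the weight $\phi=(1-\delta)d_{A,E}$ and then splits the interval into a region $I_\alpha^+=\{V_c-E>\alpha^2\}$, where the factor $(1-(1-\delta)^2)$ produces a uniformly positive lower bound absorbing the error $\epsilon(h)$, and a small region $I_\alpha^-$ on which $\phi\leq\delta$, yielding directly both the weighted $L^2$ and the gradient bound. By contrast, you take $\phi=d_{A,\,E+\epsilon(h)}$ so that the eikonal relation is \emph{exact} for the actual eigenvalue; the forbidden-region contribution vanishes identically, no splitting or auxiliary parameter $\alpha$ is needed, and the identity reduces to a clean bound on $\|h\partial_r u\|_{L^2}$. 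You then recover the $L^2$ bound on $u$ via the one-dimensional Poincar\'e inequality (using the Dirichlet conditions at $R_0,R_2$), and finally pass from $\phi$ back to $d_{A,E}$ by the uniform convergence $d_{A,E+\epsilon(h)}\to d_{A,E}$.

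What each approach buys: the paper's argument is the classical Agmon--Helffer scheme and gives the $L^2$ and derivative estimates simultaneously from the identity, without appealing to Poincar\'e and without needing the a~priori inequality $E+\epsilon(h)\geq E_0$. Your approach is shorter and avoids the two-region decomposition entirely; its only extra ingredients are the spectral lower bound (immediate from $(P_hf,f)\geq E_0\|f\|^2$) and the elementary uniform-convergence step for the Agmon distances, both of which are straightforward here. A minor remark: the statement of Lemma~\ref{ipp identity} already allows Lipschitz weights, so no approximation argument is needed.
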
  

\begin{proof}
We recall that $d_{A,E}$ is  Lipschitz continuous and consider the weight
$\phi=(1-\delta)d_{A,E}$ with $0<\delta<1$. Let us write $E_h:=E+\epsilon(h)$. We can then apply the identity of Lemma~\ref{ipp identity} with $\ph$ which solves $P_h \ph=E_h\ph$, or equivalently
$$
-\mathds{1}_{(R_0,R_1)}ch^2\left(\partial^2_r+\frac{1}{r}\partial_r \right)\psi_{h,-}=\mathds{1}_{(R_0,R_1)}(E_h-V_c(r))\psi_{h,-},
$$
and
$$
-\mathds{1}_{(R_1,R_2)}ch^2\left(\partial^2_r+\frac{1}{r}\partial_r \right)\psi_{h,+}=\mathds{1}_{(R_1,R_2)}(E_h-V_c(r))\psi_{h,+}.
$$
We find:
 \begin{multline*}
    \int_{R_0}^{R_2} c h^2\left|\partial_r(e^{\phi/h}\ph)  \right|^2 rdr-\int_{R_0}^{R_2} c|\partial_r \phi|^2 e^{2 \phi/h}|\ph|^2 rdr
    = \int_{R_0}^{R_2}e^{2\phi/h} (E_h-V_c(r))|\ph|^2rdr.
  \end{multline*}

Let us define $I_{\alpha}^+:=\{V-E>\alpha^2\}$, $I_\alpha^-=\{V-E\leq \alpha^2\}$ with $\alpha>0$ small to be chosen. We split the integrals according to $(R_0,R_2)=I_{\alpha}^+ \cup I_{\alpha}^-$ and write the above equality as
\begin{multline}
\label{eq with two split integrals}
  \int_{R_0}^{R_2} c h^2\left|\partial_r(e^{\phi/h}\ph)  \right|^2 rdr+\int_{I_{\alpha}^+}e^{2\phi/h}(V_c(r)-E_h-c|\partial_r \phi|^2)|\ph|^2 rdr \\=- \int_{I_{\alpha}^-}e^{2\phi/h}(V_c(r)-E_h-c|\partial_r \phi|^2)|\ph|^2 rdr.
\end{multline}
To control the integral on $I_{\alpha}^+$ from below we notice that the Agmon distance satisfies the eikonal equation:
\begin{equation}
    \label{eikonal equation}
c|\partial_r d_{A,E}|^2=(V_c(r)-E)_+, \quad \text{in }\mathcal{D}^\prime((R_0,R_2)).
\end{equation}
Hence, taking $h\leq h_0=h_0(\alpha,\delta)$
\begin{align}
\label{estimate from below first integral}
\int_{I_{\alpha}^+}e^{2\phi/h}(V_c(r)-E_h-c|\partial_r \phi|^2)|\ph|^2 rdr& =\int_{I_{\alpha}^+}e^{2\phi/h}\left((V_c(r)-E)(1-(1-\delta)^2)-\epsilon(h)\right)|\ph|^2 rdr  \nonumber \\
& \geq \frac{\alpha^2 \delta}{2} \int_{I_{\alpha}^+}e^{2\phi/h}|\ph|^2 rdr,
\end{align}
where $h_0$ is such that
$
\epsilon(h)\leq \frac{\alpha^2 \delta}{2}
$
for all $h\leq h_0$.
For the integral on $I_{\alpha}^-$ we have
\begin{equation}
    \label{estimate from above second int}
    \left| \int_{I_{\alpha}^-}e^{2\phi/h}(V_c(r)-E_h-c|\partial_r \phi|^2)|\ph|^2 rdr \right| \leq C \int_{I_{\alpha}^-}e^{2\phi/h} |\ph|^2 rdr,
\end{equation}
where $C$ depends on $\text{max }V_c$. Putting \eqref{eq with two split integrals}, \eqref{estimate from below first integral}, \eqref{estimate from above second int} together we find
$$
 \int_{R_0}^{R_2} c h^2\left|\partial_r(e^{\phi/h}\ph)  \right|^2 rdr+\alpha^2 \delta \int_{I_{\alpha}^+}e^{2\phi/h}|\ph|^2 rdr \leq C \int_{I_{\alpha}^-}e^{2\phi/h} |\ph|^2 rdr,
$$
which gives
\begin{equation}
    \label{ineq before agmon 1}
    \int_{R_0}^{R_2} c h^2\left|\partial_r(e^{\phi/h}\ph)  \right|^2 rdr+\alpha^2 \delta \int_{R_0}^{R_2} e^{2\phi/h}|\ph|^2 rdr \leq C \int_{I_{\alpha}^-}e^{2\phi/h} |\ph|^2 rdr.
\end{equation}
We estimate now $\phi$ in $I_{\alpha}^-$. Taking $\alpha^2 \leq \frac{\eta_0}{2}$ we have, with $\re=\vc^{-1}(E)$ 
$$
d_{A,E}(r)=\int_{r}^{\re} \sqrt{\frac{(V_c(s)-E)}{c(s)}}ds ,\quad \text{for }r\in I_{\alpha}^-.
$$
Using that $\vc^{-1}$ is Lipschitz on $[R_0,\re]$ implies that for $r \in I^-_{\alpha}$ one has
$$
|r-\re| \leq C |V_c(r)-V_c(\re)| \leq C \alpha^2.
$$
Choosing then $\alpha_0=\alpha_0(\delta)$ sufficiently small we obtain for $r\in I_{\alpha}^-$ and $ \alpha \leq \alpha_0$:
$$
\phi(r)=(1-\delta)d_{A,E}=(1-\delta)\int_{r}^{\re} \sqrt{\frac{(V_c(s)-E)}{c(s)}}ds \leq C (1-\delta) |r-\re| \leq \Tilde{C} \alpha^2\leq \delta.
$$
which combined with \eqref{ineq before agmon 1} yields, using $R_0>0$,
\begin{equation}
    \label{agmon with phi}
 \int_{R_0}^{R_2}  h^2\left|\partial_r(e^{\phi/h}\ph)  \right|^2 dr+\alpha^2 \delta \int_{R_0}^{R_2} e^{2\phi/h}|\ph|^2 dr \leq C e^{2\delta/h}.
\end{equation}
One needs finally to replace $\phi$ by $d_{A,E}$ in the above estimate. To do this we simply write, with $d_{A,E} \leq M$:
\begin{align*}
  \int_{R_0}^{R_2}h^2 \left | \partial_r \left(e^{\frac{d_{A,E}}{h}}  \ph\right)\right |^2 dr
  & =  \int_{R_0}^{R_2}h^2\left | \partial_r \left(e^{ \frac{ \delta d_{A,E}}{h}} e^{\frac{ \phi}{h}}  \ph\right)\right |^2 dr 
  \\
  &\hspace{-8mm}\leq C \int_{R_0}^{R_2}h^2 e^{ \frac{2\delta d_{A,E}}{h}}\left|\partial_r(e^{\phi/h} \ph)\right|^2 dr
 +C\delta^2 \int_{R_0}^{R_2}|d_{A,E}^\prime|^2 e^{\frac{2 \delta d_{A,E}}{h}} e^{2 \phi/h} |\ph|^2 dr
 \\ &\hspace{-8mm}\leq Ch^2 e^{ \frac{2\delta M}{h}} \int_{R_0}^{R_2} \left|\partial_r(e^{\phi/h}\ph)\right|^2 dr
 +C \delta^2 \norm{V_c-E}{L^\infty}{}e^{ \frac{2\delta M}{h}}\int_{R_0}^{R_2}e^{2 \phi/h} |\ph|^2dr.
 \end{align*}
Combining this together with \eqref{agmon with phi} concludes the proof of Proposition~\ref{agmon estimate 1d}.
\end{proof}

It follows from the Agmon estimate above that the mass of eigenfunctions close to the minimum energy level $E_0$ should be concentrated close to the point $R_1$ where the coefficient $c$ exhibits a jump. This is the following corollary:

\begin{cor}
\label{corollary}
Let $\epsilon(h)$ with $\epsilon(h)\overset {h \rightarrow 0}{ \longrightarrow }0$. There exist $h_0>0$ such that for all $\ph$ satisfying
$$
P_h \psi_h=(E_0+\epsilon(h)) \psi_h,\quad \norm{\psi_h}{L^2}{}=1,\quad \psi_h \in \A_r,
$$
and all $\ve>0$ there exist $C, d>0$ such that
$$
\norm{\ph}{L^2((R_0,R_2)\backslash [R_1-\ve,R_1+\ve])}{}\leq Ce^{-\frac{d}{h }}
$$
\end{cor}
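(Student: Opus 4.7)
The idea is simply to feed the Agmon estimate of Proposition~\ref{agmon estimate 1d} applied at the energy level $E = E_0$ and read off the pointwise lower bound of the Agmon distance $d_{A,E_0}$ away from $R_1$.

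First I would observe that at the bottom energy $E = E_0$, the classically allowed region collapses to a single point: since $\vc$ attains its unique minimum at $r = R_1$ with $\vc(R_1) = E_0$, the set $K_{E_0} = \{r \in [R_0,R_2] \mid V_c(r) \le E_0\}$ reduces to $\{R_1\}$. Consequently
\begin{equation*}
d_{A,E_0}(r) = \left|\int_{R_1}^{r} \sqrt{\tfrac{(V_c(s)-E_0)_+}{c(s)}}\,ds\right|,
\end{equation*}
and this quantity is strictly positive for every $r \neq R_1$, continuous on $[R_0,R_2]$. For a fixed $\varepsilon > 0$ the continuous function $d_{A,E_0}$ therefore attains a positive minimum
\begin{equation*}
d_\varepsilon := \min\bigl\{ d_{A,E_0}(r) \mid r \in [R_0,R_2]\setminus (R_1-\varepsilon, R_1+\varepsilon)\bigr\} > 0
\end{equation*}
on the compact set outside $(R_1-\varepsilon, R_1+\varepsilon)$.

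Next I would apply Proposition~\ref{agmon estimate 1d} with $E = E_0$ and with the choice $\delta = d_\varepsilon/2$. Note that $E_0 \in [E_0, E_0 + \eta_0/2]$, so the hypothesis is satisfied. The proposition then yields, for $h \leq h_0$ and some constant $C$,
\begin{equation*}
\bigl\|e^{d_{A,E_0}/h}\psi_h\bigr\|_{L^2((R_0,R_2))} \leq C e^{d_\varepsilon/(2h)}.
\end{equation*}
Restricting the $L^2$ norm on the left-hand side to $(R_0,R_2) \setminus [R_1-\varepsilon, R_1+\varepsilon]$ and using the lower bound $d_{A,E_0}(r) \geq d_\varepsilon$ on this set gives
\begin{equation*}
e^{d_\varepsilon/h} \bigl\|\psi_h\bigr\|_{L^2((R_0,R_2)\setminus [R_1-\varepsilon, R_1+\varepsilon])} \leq \bigl\|e^{d_{A,E_0}/h}\psi_h\bigr\|_{L^2((R_0,R_2))} \leq C e^{d_\varepsilon/(2h)},
\end{equation*}
so that
\begin{equation*}
\bigl\|\psi_h\bigr\|_{L^2((R_0,R_2)\setminus [R_1-\varepsilon, R_1+\varepsilon])} \leq C e^{-d_\varepsilon/(2h)},
\end{equation*}
which is the claimed estimate with $d = d_\varepsilon/2$.

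There is no real obstacle here, since all the hard work is packaged in Proposition~\ref{agmon estimate 1d}; the only subtle points are the identification $K_{E_0} = \{R_1\}$ (which rests on hypothesis~\eqref{condition on c}) and the continuity-compactness argument giving the positive lower bound $d_\varepsilon$, so that one can absorb the prefactor $e^{\delta/h}$ from the Agmon estimate by choosing $\delta$ strictly smaller than $d_\varepsilon$.
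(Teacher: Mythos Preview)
Your proof is correct and follows exactly the same approach as the paper: apply Proposition~\ref{agmon estimate 1d} at the bottom energy $E=E_0$, use that $d_{A,E_0}$ has a positive lower bound $m$ (your $d_\varepsilon$) on the compact set $(R_0,R_2)\setminus[R_1-\varepsilon,R_1+\varepsilon]$, and choose $\delta\leq m/2$ to absorb the factor $e^{\delta/h}$. You have simply spelled out in more detail the identification $K_{E_0}=\{R_1\}$ and the continuity--compactness argument that the paper leaves implicit.
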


\begin{proof}
We recall that $E_0=V_c(R_1^{-})$ is the infimum of the potential $V_c$. Consider now a solution of $P_h \ph =(E_0+\epsilon(h))\ph$. The associated Agmon distance to the energy level $E_0$ satisfies 
$$
d_{A,E_0}(r)\geq m, \quad \text{for } r \in (R_0,R_2)\backslash [R_1-\ve,R_1+\ve].
$$
The result then follows from the estimate of Proposition~\ref{agmon estimate 1d} by taking $\delta\leq m/2$. 
\end{proof}

\subsection{Back to the two dimensional annulus}

We now put all the pieces together to state our result for the initial operator $\p=-\text{div}(c \nabla \cdot)$ defined on the annulus. Recall that the space $\A$ has been defined in~\eqref{def of big domain}.

\begin{thm}
\label{thm for eigen of -delta}

Consider $\M,S$ as defined in the beginning of Section~\ref{the corona case section}. Suppose that
$$
\frac{c_-}{R_1^2}<\frac{c_+}{R_2^2}.
$$
Then there exist sequences $(\lambda_n)_{n \in \mathbb{N}} \in \R^{\mathbb{N}}, (u_n)_{n \in \mathbb{N}} \in \A^{\mathbb{N}}$ and $E_0>0$ such that for all $\omega \subset \M$ with $\textnormal{dist}(\overline{\omega},S)>0$ there exist $C, d>0$ satisfying:
$$
\p u_n=\lambda_n u_n, \quad \lambda_n \underset{n\rightarrow +\infty}{\sim} E_0 n^2, \quad \norm{u_n}{L^2(\M)}{}=1, \quad \norm{u_n}{L^2(\omega)}{}\leq Ce^{-d n},
$$
for all $n\in \mathbb{N}$.
\end{thm}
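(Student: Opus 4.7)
The plan is to combine Proposition~\ref{existence of eigen 1d} and Corollary~\ref{corollary} with the separation of variables $u(r,\theta) = e^{in\theta} f(r)$, reading the semiclassical parameter as $h = 1/n$, and using the equivalence $f \in \mathcal{A}_r \Leftrightarrow f(r)e^{in\theta} \in \mathcal{A}$ recalled just after~\eqref{def of Ar}.

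First I would check that the hypotheses of the one-dimensional framework are met. The assumption $c_-/R_1^2 < c_+/R_2^2$ is exactly $\overline{V}_c(R_1) < V_c(R_2)$, which is the condition~\eqref{condition on c} used throughout Section~\ref{section agmon estimate 1d}, so that $E_0 := c_-/R_1^2$ is the infimum of $\widetilde{V}$ and the Agmon distance has the properties derived there. Moreover $E_0 < V_c(R_0) = c_-/R_0^2$ since $R_0 < R_1$, so $E_0 \in [E_0, V_c(R_0))$. I then apply Proposition~\ref{existence of eigen 1d} with $E = E_0$ and $h_n := 1/n$ for every $n$ sufficiently large, obtaining genuine eigenfunctions $\psi_{h_n} \in \mathcal{A}_r$ with
$$
P_{h_n} \psi_{h_n} = E_{h_n} \psi_{h_n}, \qquad |E_{h_n} - E_0| \le C h_n^{2/3},
$$
normalized so that $\|\psi_{h_n}\|_{L^2((R_0,R_2))} = 1$.

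Next I set $u_n(r,\theta) := \kappa_n \, e^{in\theta} \psi_{h_n}(r)$, where $\kappa_n > 0$ is chosen so that $\|u_n\|_{L^2(\mathcal{M})} = 1$; since $r$ ranges over $[R_0,R_2]$ with $R_0 > 0$, the weighted and unweighted $L^2$ norms on $(R_0,R_2)$ are equivalent and $\kappa_n$ is bounded above and below uniformly in $n$. The equivalence recalled after~\eqref{def of Ar} gives $u_n \in \mathcal{A}$, and unwinding the derivation that led to~\eqref{def of Ph} yields
$$
-\Delta_c u_n = \lambda_n u_n, \qquad \lambda_n := n^2 E_{h_n},
$$
so that $\lambda_n = E_0 n^2 + O(n^{4/3})$, hence $\lambda_n \sim E_0 n^2$ as $n \to \infty$.

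Finally I would transfer the exponential localization. Given $\omega \subset \mathcal{M}$ with $\text{dist}(\overline{\omega}, S) > 0$, there exists $\varepsilon > 0$ such that the radial projection of $\omega$ is contained in $[R_0,R_2] \setminus [R_1-\varepsilon, R_1+\varepsilon]$. Applying Corollary~\ref{corollary} with $\epsilon(h_n) := E_{h_n} - E_0 = O(h_n^{2/3}) \to 0$ provides constants $C, d > 0$, independent of $n$, such that $\|\psi_{h_n}\|_{L^2((R_0,R_2) \setminus [R_1-\varepsilon, R_1+\varepsilon])} \le C e^{-d/h_n} = C e^{-dn}$; since $|u_n(r,\theta)|$ is independent of $\theta$, integration in polar coordinates transfers this to $\|u_n\|_{L^2(\omega)} \le C' e^{-dn}$, which is the desired bound. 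I do not expect a genuine obstacle at this stage: all the analytic content sits in Proposition~\ref{agmon estimate 1d} and Corollary~\ref{corollary}, and the present theorem amounts to bookkeeping that reverses the semiclassical reduction.
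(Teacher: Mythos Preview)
Your proposal is correct and follows essentially the same route as the paper: apply Proposition~\ref{existence of eigen 1d} at $E=E_0$ with $h=1/n$, lift $\psi_{h}$ to $u_n=e^{in\theta}\psi_h(r)$ via the equivalence after~\eqref{def of Ar}, and use Corollary~\ref{corollary} on a radial strip to get the exponential decay. The only cosmetic difference is that the paper normalizes $\psi_h$ directly in $L^2((R_0,R_2),r\,dr)$ with norm $1/\sqrt{2\pi}$ instead of introducing your constant $\kappa_n$.
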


\begin{proof}
Let $h=1/n$ and consider $\psi_h \in \A_r$ satisfying 
$$
P_h\ph=(E_0+ O(h^{2/3}))\ph, \quad \norm{\ph}{L^2((R_0,R_2),rdr)}{}=\frac{1}{\sqrt{2 \pi}}.
$$
the existence of such a family $(\ph)_h$ is given by Proposition~\ref{existence of eigen 1d}. We define then
$$
u_n(r,\theta):=e^{i n \theta}\ph(r).
$$
It follows (see the remark after the definition of $\A_r$ in \eqref{def of Ar}) that $u_n \in \A$, $\norm{u_n}{L^2(\M)}{}=1$ and that (see \eqref{1d equation})
$$
\p u_n=n^2\left(E_0+O(n^{-2/3})\right)u_n.
$$
Let finally $\omega \subset \M$ satisfy  $\textnormal{dist}(\overline{\omega},S)>0$. That means that there exists $\ve>0$ with 
\begin{align*}
    \norm{u_n}{L^2( \omega)}{} &\leq \norm{u_n}{L^2(\M \backslash (B(0,R_1+\ve)\backslash B(0,R_1- \ve)))}{}=\norm{\ph}{L^2((R_0,R_2)\backslash [R_1-\ve,R_1+\ve])}{},\\
    &\leq Ce^{-dn},
\end{align*}
thanks to Corollary ~\ref{corollary}.
\end{proof}

\section{The disk and other surfaces of revolution}
\label{other surfaces of revolution}

We considered in Section~\ref{the corona case section} the case where $\M$ is an annulus. The reason we presented the proof for an annulus is that the singularity coming from the change of variables to polar coordinates at $0$ disappears, allowing us to work exclusively in one dimension. Of course, from an heuristic point of view this should not be a problem since we are interested in the behaviour of the eigenfunctions away from zero. However, in the case of the disk it is sometimes simpler to work with the initial operator $\p$ and not with its 1-dimensional analogue~\eqref{def of Ph} since in this case it becomes more intricate to describe its domain $\A_r$ of self-adjointness when $r\in (0,R_2)$.

In this section we briefly explain how the method presented in Section~\ref{the corona case section} can be used to exhibit maximally vanishing eigenfunctions in the case of the disk or even for some surfaces of revolution embedded in $\R^3$ diffeomorphic to a disk. For the geometric description of such manifolds we follow~\cite[Section 3]{LL:18} and~\cite[Section 4]{ LL:18vanish}. 

Let $(\M, g)$ be an embedded 2D submanifold of $\R^3$ having $\mathbb{S}^1$ as an effective isometry group. We denote by $ \mathbb{S}^1 \times \M \ni (\theta, s) \longrightarrow \mathcal{R}_{\theta}s$ the action of $ \mathbb{S}^1$ on $\M$ which satisfies $\mathcal{R}_{\theta} \M=\M$ and we suppose that it has exactly one fixed point denoted by $N \in \M$. We define $L=\textnormal{dist}_g(N, \partial \M)$. Then one can find coordinates 
$$
\M \ni m \longrightarrow \zeta(m)=(s,\theta) \in (0,L] \times \mathbb{S}^1,
$$
such that the metric becomes
$$
(\zeta^{-1})^*g=ds^2+R(s)d\theta^2,
$$
and $R$ is a smooth function $R: [0,L] \rightarrow \R^+_*$ which can be interpreted as the Euclidean distance in $\R^3$ of a point of $\M$ to the symmetry axis. The disk of radius $L$ centered at $0$ corresponds to the case $R(s)=s$. 

In the new coordinates, the Riemannian volume form is $R(s)dsd\theta$ and the Laplace-Beltrami operator is given by 
$$
\Delta_{s, \theta}=\frac{1}{R(s)}\partial_s(R(s)\partial_s)+\frac{1}{R^2(s)} \partial^2_\theta.
$$

Choose now a point $s_0 \in (0,L)$ and define $\dom=\{N\}\cup \zeta^{-1}((0,s_0)\times \mathbb{S}^1)$, $\domm=\zeta^{-1}((s_0,L]\times \mathbb{S}^1)$, $S=\zeta^{-1}(\{s_0\}\times \mathbb{S}^1)$. We consider as well the coefficient $c=c(s)=\mathds{1}_{\dom}c_-+\mathds{1}_{\domm}c_+$, and
$$
\Delta_c:=\frac{1}{R(s)}\partial_s( c(s) R(s)\partial_s)+\frac{c(s)}{R^2(s)} \partial^2_\theta,
$$
which is well defined and self adjoint in the space $\A$, defined in~\eqref{def of big domain} as explained in Section~\ref{domain of the op}. More precisely, for $u \in \A$ one has
$$
\partial_s( c(s) R(s)\partial_s u)=\mathds{1}_{\dom}c_-\partial_s(R(s)\partial_s u_-)+\mathds{1}_{\dom}c_+\partial_s(R(s)\partial_s u_+), \quad \textnormal{in } \mathcal{D}^\prime(\M),
$$
where $u_\pm=u_{|\Omega_{\pm}}$.
The effective potential becomes now $
V_c(s)=\frac{c(s)}{R^2(s)},
$
and we define $V_c(s_0)=\frac{c_-}{R^2(s_0)}$ and $E_0= \textnormal{min }V_c$ . Consider now the operator $\tilde{P}_h$ given by
$$
\tilde{P}_h=-h^2\left( \frac{1}{R(s)}\partial_s(c (s)R(s)\partial_s)\right)+\frac{c(s)}{R^2(s)} \partial^2_\theta .
$$
It can be as shown as in \cite[Section 3.2]{LL:18} that if $\lambda \in \R$ is an eigenvalue of $\Tilde{P}_h$ then there is an eigenfunction of the form $e^{ i n \theta} \psi(s)$ with $n \in \mathbb{Z}$ and $\psi \in L^2((0,R_2),R(s)ds)$. This and the arguments used in the proof of Proposition~\ref{existence of eigen 1d} give us the following:
\begin{prop}
\label{existence of eigen 2d}
For all $n \in \mathbb{N}$ there exists $u_n \in \A$ such that:
$$
\p u_n=n^2\left(E_0+O\left(\frac{1}{n^{2/3}}\right)\right)u_n,\quad \norm{u_n}{L^2(\M)}{}=1, \quad u_n(s,\theta)=e^{i n \theta}\psi_n(s).
$$
    
\end{prop}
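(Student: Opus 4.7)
The plan is to mimic the proof of Proposition~\ref{existence of eigen 1d} together with the translation step in Theorem~\ref{thm for eigen of -delta}, working on the surface of revolution $\mathcal{M}$ rather than on the annulus. The essential observation is that once we pass to Fourier modes $e^{in\theta}\psi(s)$ the eigenvalue equation $\tilde{P}_h u = E_h u$ reduces to the one dimensional problem
\begin{equation*}
Q_h \psi := -h^2\frac{1}{R(s)}\partial_s\bigl(c(s)R(s)\partial_s \psi\bigr)+V_c(s)\psi = E_h\psi, \qquad V_c(s)=\frac{c(s)}{R^2(s)},
\end{equation*}
with the same transmission conditions at $s=s_0$ as in Section~\ref{the corona case section}. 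Under the standing assumption that the minimum $E_0$ of $V_c$ is attained (strictly) at $s_0^{-}$, exactly as in the annulus case the effective potential exhibits a jump minimum at the interface $S$.

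First I would write $E=E_0$ (or any $E\in[E_0,E_0+\eta_0)$) as $E=\overline V_c(\rho_E)$ with $\rho_E\in(0,s_0]$, then pick a cutoff $\chi\in C^\infty_0((-1,0))$, equal to $1$ near $-1/2$, and define the rescaled bump
\begin{equation*}
\psi^{\text{qm}}_h(s) := h^{-1/3}\chi\bigl(h^{-2/3}(s-\rho_E)\bigr).
\end{equation*}
For $h\leq h_0$ small enough this is supported in a small interval contained in $(0,s_0)$, hence compactly supported in $\Omega_-$ and automatically in the 1D analogue of the domain $\A$ (the transmission conditions at $s_0$ are trivially satisfied since the function vanishes there, and being supported away from the fixed point $N$ removes any issue at $s=0$). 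The same two estimates as in Proposition~\ref{existence of eigen 1d} then apply: the derivative term contributes $O(h^{4/3})$ since $\partial_s$ and $\frac{1}{R(s)}\partial_s(R(s)\cdot)$ differ by a bounded factor in a neighborhood of $\rho_E\neq 0$, and the potential term contributes $O(h^{4/3})$ by Lipschitz continuity of $\overline V_c$ on compact subsets of $(0,s_0]$. This yields $\|(Q_h-E)\psi^{\text{qm}}_h\|_{L^2}\leq Ch^{2/3}\|\psi^{\text{qm}}_h\|_{L^2}$.

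The next step is to lift this to $\mathcal{M}$. Setting $u^{\text{qm}}_n(s,\theta)=e^{in\theta}\psi^{\text{qm}}_{1/n}(s)$ with $h=1/n$ gives a function in $\A$ (again because the cutoff is compactly supported inside $\Omega_-$, away from $N$ and from $S$) which satisfies
\begin{equation*}
\|(\tilde{P}_h-E)u^{\text{qm}}_n\|_{L^2(\mathcal{M})}\leq Ch^{2/3}\|u^{\text{qm}}_n\|_{L^2(\mathcal{M})},
\end{equation*}
thanks to the separation of variables. Since $-\Delta_c$ is self-adjoint on $\A$ with purely discrete spectrum, the same resolvent argument used in Proposition~\ref{existence of eigen 1d} (the identity $\|(\tilde P_h-E)^{-1}\|=\mathrm{dist}(E,\mathrm{Sp}(\tilde P_h))^{-1}$) produces a true eigenvalue $E_h$ within $Ch^{2/3}$ of $E$ and an associated eigenfunction. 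To ensure the eigenfunction has the prescribed angular dependence $e^{in\theta}\psi_n(s)$, one decomposes the eigenspace according to the $\mathbb{S}^1$-action: each eigenspace of $\tilde P_h$ is stable under rotations $\mathcal{R}_\theta$, so it splits as a direct sum of Fourier modes, and one picks the $n$-th component (which is nontrivial because our quasimode has this exact angular profile).

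The main obstacle is ensuring that the separation of variables is rigorously compatible with the domain $\A$, which is subtle near the fixed point $N$ where the coordinate $s$ degenerates. The trick that sidesteps this issue is precisely the one emphasized before the statement: we work with the 2D operator $\p$ on $\A$ for everything involving self-adjointness and spectral theory, and only use the 1D reduction to construct the quasimode, which we arrange to be supported strictly inside $\Omega_-\setminus\{N\}$. Under this arrangement the lift $e^{in\theta}\psi^{\text{qm}}_h(s)$ lies in $\A$ without having to confront the behavior at $N$, and the transition from quasimode to eigenmode happens entirely within the self-adjoint framework of $(\p,\A)$. Setting finally $h=1/n$ and renormalizing in $L^2(\mathcal{M})$ delivers $u_n$ with the claimed properties.
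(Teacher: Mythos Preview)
Your approach mirrors the paper's: build a bump quasimode supported compactly in $(0,s_0)$ away from the pole $N$, lift it via $e^{in\theta}$, and invoke self-adjointness on $\A$. The paper's own justification is a one-line reference to \cite[Section 3.2]{LL:18} together with the argument of Proposition~\ref{existence of eigen 1d}, so your write-up is in fact more detailed than what appears there.

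One step needs tightening. After the resolvent bound produces an eigenvalue $E_h$ of the full operator within $Ch^{2/3}$ of $E$, you argue that the $n$-th Fourier component of the associated eigenspace is nontrivial ``because our quasimode has this exact angular profile.'' This inference is not valid as stated: the distance-to-spectrum estimate only locates \emph{some} eigenvalue near $E$, it does not say the quasimode is close to the corresponding eigenspace, and a priori that eigenspace could lie entirely in a different angular sector $m\neq n$. The clean fix---and this is what the paper's citation of \cite[Section 3.2]{LL:18} is meant to supply---is to restrict \emph{before} applying the resolvent argument: since $\p$ commutes with rotations, its restriction to the invariant sector $\mathcal{H}_n=\{e^{in\theta}\psi(s)\}$ is itself self-adjoint with compact resolvent, your quasimode $u^{\text{qm}}_n$ already lies in $\mathcal{H}_n$, and the identity $\|(\p|_{\mathcal{H}_n}-n^2E)^{-1}\|=\textnormal{dist}(n^2E,\textnormal{Sp}(\p|_{\mathcal{H}_n}))^{-1}$ then directly yields an eigenfunction of the required separated form.
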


We can then repeat the same steps is in Section~\ref{the corona case section}. We define as in~\eqref{Agmon distance} the Agmon distance to the energy level $E$ by

\begin{equation*}
    \label{Agmon distance modified}
    d_{A,E}(s)=\underset{y \in K_E}{\text{inf}}\left| \int_{y}^{r} \sqrt{\frac{(V_c(x)-E)_+}{c(x)}}dx \right|,
\end{equation*}
which satisfies the equation $c|\partial_s d_{A,E}|^2=(V_c(s)-E)_+,$ in $\mathcal{D}^\prime ((0,L))$. Working directly on $\M$ and using the fact that $u_n \in \A$ allows us to obtain the key identity of Lemma~\ref{ipp identity}. We define the Agmon distance on $\M$ as the pullback by $\zeta$ of the Agmon distance defined for $s\in (0,L]$. 

We now assume that $V_c^{-1}(E_0)=\{s_0\}$. That is to say
$$
\frac{1}{R^2(s)}>\frac{1}{R^2(s_0)},\: s\in (0,s_0) \quad \textnormal{and } \frac{c_-}{R^2(s_0)}<\underset{s\in[s_0,L]}{\min} \frac{c_+}{R^2(s)}.
$$
With the key identity at hand we obtain then the Agmon estimate of Proposition~\ref{agmon estimate 2d}. The proof is very similar to the proof of Proposition~\ref{agmon estimate 1d}, but needs some care with respect to the degeneracy at the pole $N$ where the Agmon distance tends to infinity (see Figure~\ref{graphs}). This issue is treated in~\cite[Theorem 3.9]{LL:22} and we omit it.

\begin{prop}
    \label{agmon estimate 2d}
    For all $\delta>0$ and $\epsilon(n)$ with $\epsilon(n)\underset {n \rightarrow \infty}{ \longrightarrow }0$, there exist $C$, $n_0$ such that for all $u_n$ satisfying
$$
\p u_n=n^2(E_0+\epsilon(n)) u_n,\quad \norm{u_n}{L^2}{}=1,\quad u_n \in \A,
$$
one has for $n\geq n_0$:
\begin{equation*}
   \norm{e^{n d_{A,E_0}} u_n }{L^2(\M)}{} \leq C e^{  n \delta}.
\end{equation*}
\end{prop}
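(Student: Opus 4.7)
The plan is to transplant the one-dimensional argument of Proposition~\ref{agmon estimate 1d} directly to $\M$, working with the two-dimensional operator $\p$ itself and thereby sidestepping any delicate description of a self-adjoint domain for the one-dimensional reduction at the pole $N$. Since $u_n\in\A$, the transmission conditions~\eqref{trans cond general 1}--\eqref{trans cond general 2} at $S$ and the Dirichlet condition at $\partial\M$ are both at our disposal.

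The first step is to establish a two-dimensional analogue of Lemma~\ref{ipp identity}: for any real-valued Lipschitz weight $\phi:\M\to\R$ and any $u\in\A$,
\begin{equation*}
\int_\M c h^2 |\nabla_g(e^{\phi/h}u)|^2\, dv_g - \int_\M c|\nabla_g\phi|^2 e^{2\phi/h}|u|^2\, dv_g = -\operatorname{Re}\int_{\dom\cup\domm} e^{2\phi/h}\, c h^2 \Delta_g u \cdot \bar u\, dv_g,
\end{equation*}
where $\Delta_g$ denotes the Laplace--Beltrami operator on $(\M,g)$. The derivation proceeds exactly as in Lemma~\ref{ipp identity}: split $u = \mathds{1}_{\dom}u_- + \mathds{1}_{\domm}u_+$, apply Green's identity separately on $\dom$ and $\domm$, and note that the boundary contributions at $S$ cancel by virtue of the transmission conditions, while those at $\partial\M$ vanish because $u\in H^1_0(\M)$.

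Applied to $u_n(s,\theta)=e^{in\theta}\psi_n(s)$ with $h=1/n$, the right-hand side collapses via $-c\Delta_g u_n = \p u_n = n^2(E_0+\epsilon(n))u_n$ on each piece, and the decomposition $|\nabla_g u_n|^2 = |\partial_s\psi_n|^2 + n^2 R^{-2}(s)|\psi_n|^2$ produces the centrifugal contribution $\int_\M V_c(s)\,e^{2\phi/h}|\psi_n|^2 dv_g$, which combines with $E_h := E_0+\epsilon(n)$ to yield the two-dimensional analogue
\begin{equation*}
\int_\M c h^2|\partial_s(e^{\phi/h}\psi_n)|^2\, dv_g - \int_\M c|\partial_s\phi|^2 e^{2\phi/h}|\psi_n|^2\, dv_g = \int_\M (E_h - V_c(s))\, e^{2\phi/h}|\psi_n|^2\, dv_g
\end{equation*}
of the identity obtained in the course of the proof of Proposition~\ref{agmon estimate 1d}. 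Choosing $\phi=(1-\delta)d_{A,E_0}$ (which depends only on $s$), splitting $(0,L) = I_\alpha^+\cup I_\alpha^-$, and invoking the eikonal equation $c|\partial_s d_{A,E_0}|^2=(V_c-E_0)_+$, the chain of inequalities leading from~\eqref{estimate from below first integral} through~\eqref{agmon with phi} transfers essentially verbatim, as does the final passage from the weight $\phi$ to $nd_{A,E_0}$ at the cost of the factor $e^{n\delta}$.

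The main obstacle is the behavior at the pole $N$. Since $R(s)\to 0$ as $s\to 0$, the potential $V_c(s) = c_-/R^2(s)$ blows up there, and so does the Agmon distance $d_{A,E_0}$ (compare with the right panel of Figure~\ref{graphs}). In particular, $\phi = (1-\delta)d_{A,E_0}$ is not bounded on $\M$, and a literal application of the key identity is not justified in a neighborhood of $N$. The standard remedy, carried out in detail in~\cite[Theorem 3.9]{LL:22}, is to work with a truncated weight $\phi_M = \min(\phi, M)$ and then let $M\to\infty$: the fact that $u_n$ has fixed angular momentum $n$ provides, through the centrifugal barrier $n^2/R^2$ in the effective one-dimensional equation, vanishing of $\psi_n$ near $s=0$ at a rate sufficient to absorb the growth of $e^{nd_{A,E_0}}$. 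Once this truncation and limiting procedure is in place, what remains reduces to the bulk computation sketched above.
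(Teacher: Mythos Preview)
Your proposal is correct and follows essentially the same approach as the paper: the paper's own proof of Proposition~\ref{agmon estimate 2d} is a brief sketch stating that the argument of Proposition~\ref{agmon estimate 1d} transfers once one works directly with $\p$ on $\M$ (using the key identity derived from the transmission conditions, as you do), with the only new ingredient being the care required at the pole $N$ where $d_{A,E_0}$ blows up, for which it also refers to~\cite[Theorem 3.9]{LL:22}. Your outline makes the same reduction and invokes the same truncation-and-limit remedy for the pole, so the two treatments coincide.
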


 \begin{figure}
    \centering
    \begin{tikzpicture}

   \coordinate (O) at (0,0) ;
   \filldraw[color=black!60, fill=orange!10, ] circle (3);

   \filldraw[color=red!60, fill=orange!10,  very thick] circle (1.5);

 \filldraw circle (1pt);
  \draw(0,0)--(3,0)  node[pos=1,right] {$R_2$} ; 
  \draw(0,0)--(-1.06,-1.06)  node[pos=1,right] {$R_1$} ; 
  \draw node at (3, 3)   {$\mathcal{M}$};

  \draw node at (1.3, 1.3)   {$ \color{red} S $};
 
\end{tikzpicture}
\begin{tikzpicture}

\newcommand{\radiusx}{3}
\newcommand{\radiusy}{.5}
\newcommand{\height}{5}

\newcommand{\radiusz}{2}
\newcommand{\radiusw}{0.5}

\coordinate (a) at (-{\radiusx*sqrt(1-(\radiusy/\height)*(\radiusy/\height))},{\radiusy*(\radiusy/\height)});

\coordinate (b) at ({\radiusx*sqrt(1-(\radiusy/\height)*(\radiusy/\height))},{\radiusy*(\radiusy/\height)});

\draw[fill=orange!3] (a)--(-1,\height-1);

\draw[fill=orange!3] (b)--(1,\height-1);

  \draw (-1,\height-1) to [out=60,in=177] (0,\height);

     \draw (1,\height-1) to [out=120,in=3,] (0,\height);

\fill[orange!3] circle (\radiusx{} and \radiusy);

\begin{scope}
\clip ([xshift=-2mm]a) rectangle ($(b)+(1mm,-2*\radiusy)$);
\draw circle (\radiusx{} and \radiusy);
\end{scope}

\begin{scope}
\clip ([xshift=-2mm]a) rectangle ($(b)+(1mm,2*\radiusy)$);
\draw[dashed] circle (\radiusx{} and \radiusy);
\end{scope}

 \filldraw[dashed, color=red!60, fill=orange!3,  very thick] (0,1.56) circle(\radiusz{}+0.2 and \radiusw);

 \begin{scope}
    \clip (-2.2,1.56) rectangle (2.2,1) ;
    \filldraw[color=red!60, fill=orange!3,  very thick] (0,1.56) circle(\radiusz{}+0.2 and \radiusw);
\end{scope}

\draw node at (2.2, 2)   {$\color{red} S $};

 \draw[dashed] (0,1.56)|-(\radiusz,1.56) node[above,pos=.6]{$R_1$};

\draw[dashed] (0,\height)|-(\radiusx,0) node[right, pos=.25]{$L$} node[above,pos=.6]{$R_2$};

\draw (0,.15)-|(.15,0);

\draw node at (3, 3)   {$\mathcal{M}$};
\end{tikzpicture}

 \caption{A disk and a surface of revolution diffeomorphic to the disk for which Theorem~\ref{theorem for disk or surfaces}} applies under an assumption for the coefficient $c$ similar to the one considered for the annulus.
    \label{disk}
\end{figure}

This gives finally the following result:

\begin{thm}
\label{theorem for disk or surfaces}

Consider $\M,S, c$ as defined in Section~\ref{other surfaces of revolution}. Suppose that $V_c^{-1}(E_0)=\{s_0\}$.

Then there exist sequences $(\lambda_n)_{n \in \mathbb{N}} \in \R^{\mathbb{N}}, (u_n)_{n \in \mathbb{N}} \in \A^{\mathbb{N}}$ such that for all $\omega \subset \M$ with $\textnormal{dist}_g(\overline{\omega},S)>0$ there exist $C, d>0$ satisfying:
$$
\p u_n=\lambda_n u_n, \quad \lambda_n \underset{n\rightarrow +\infty}{\sim} E_0 n^2, \quad \norm{u_n}{L^2(\M)}{}=1, \quad \norm{u_n}{L^2(\omega)}{}\leq Ce^{-d n},
$$
for all $n\in \mathbb{N}$.
\end{thm}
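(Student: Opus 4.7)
The plan is to assemble the two ingredients already in place in Section~\ref{other surfaces of revolution}, following the same scheme as the proof of Theorem~\ref{thm for eigen of -delta}. First I would invoke Proposition~\ref{existence of eigen 2d} to obtain, for each $n\in\mathbb{N}$, an eigenfunction $u_n=e^{in\theta}\psi_n(s)\in\A$ with $-\Delta_c u_n=\lambda_n u_n$, $\lambda_n=n^2(E_0+O(n^{-2/3}))$ and $\|u_n\|_{L^2(\M)}=1$. This sequence is the candidate, and the stated asymptotics $\lambda_n\sim E_0 n^2$ are already built in.

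Next, for a prescribed $\omega$ with $\textnormal{dist}_g(\overline{\omega},S)>0$, I would apply Proposition~\ref{agmon estimate 2d} with $\epsilon(n)=O(n^{-2/3})\to 0$: for every $\delta>0$ there exist $C$, $n_0$ such that $\|e^{n d_{A,E_0}}u_n\|_{L^2(\M)}\leq Ce^{n\delta}$ for all $n\geq n_0$. The conclusion then follows from the trivial bound
\[
\|u_n\|_{L^2(\omega)}^2\leq e^{-2n m}\,\|e^{n d_{A,E_0}}u_n\|_{L^2(\M)}^2,\qquad m:=\inf_{\overline{\omega}}d_{A,E_0},
\]
upon choosing, say, $\delta=m/4$ in Proposition~\ref{agmon estimate 2d}; this yields $\|u_n\|_{L^2(\omega)}\leq Ce^{-nm/2}$, i.e.\ the required bound with $d=m/2$.

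The one point that needs checking is the strict positivity of $m$ whenever $\overline{\omega}$ is separated from $S$. The hypothesis $V_c^{-1}(E_0)=\{s_0\}$ says precisely that $V_c(s)>E_0$ for every $s\in(0,L]\setminus\{s_0\}$, so the Agmon distance defined in Section~\ref{other surfaces of revolution} and pulled back to $\M$ via $\zeta$ vanishes exactly on $S$, is continuous and strictly positive on $\M\setminus S$, and tends to $+\infty$ as $s\to 0^+$ because $R(s)\to 0$ forces $V_c\to+\infty$ near the pole $N$. Combining this behaviour with compactness of $\overline{\omega}$ and its separation from $S$ gives $m>0$.

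The only genuinely non-trivial ingredient is Proposition~\ref{agmon estimate 2d} itself. Its proof mirrors that of Proposition~\ref{agmon estimate 1d}, the key identity of Lemma~\ref{ipp identity} carrying over verbatim thanks to the transmission conditions encoded in $\A$. The main obstacle, and the reason the proof is deferred in the excerpt, is the behaviour at the pole $N$, where the polar-type coordinates degenerate and $d_{A,E_0}\to+\infty$; this is handled by the arguments of~\cite[Theorem 3.9]{LL:22}. Once Proposition~\ref{agmon estimate 2d} is granted, the assembly above is the short three-line computation just sketched.
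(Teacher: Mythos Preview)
Your proposal is correct and follows precisely the approach of the paper, which simply states ``This gives finally the following result'' after Propositions~\ref{existence of eigen 2d} and~\ref{agmon estimate 2d} without spelling out the assembly. You have in fact written out more than the paper does, including the verification that $m=\inf_{\overline{\omega}}d_{A,E_0}>0$ (which the paper leaves implicit, by analogy with Corollary~\ref{corollary}).
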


\begin{remark}
If one supposes that the function $s\mapsto R(s)$ is increasing (this is the case if for instance $\M$ is a disk or a cone-like surface) then the assumption $V_c^{-1}(E_0)=\{s_0\}$ is equivalent to
$$
\frac{c_-}{R_1^2}<\frac{c_+}{R_2^2},
$$
where $R_1:=R(s_0)$ and $R_2:=R(L)$. This is the same assumption as~\eqref{condition on c} for the annulus.

\end{remark}

\begin{remark}
Theorem~\ref{theorem for disk or surfaces} and its proof are equally valid in the case where the coefficient $c$ is rotationally invariant and piecewise smooth (not necessarily constant) satisfying $0<c_{\textnormal{min}}\leq c \leq c_{\textnormal{max}}$. Notice that in the case of a disk or an annulus, in order to satisfy $V_c^{-1}(E_0)=\{s_0\}$ with $s_0 \in (0,L)$ the coefficient $c(s)$ has indeed to present a jump discontinuity at $s=s_0$ if one supposes that $c$ is constant in $\Omega_+$ (in terms of physical applications that means that the refractive index of the outer cladding of the optical fiber is constant).
\end{remark}

\small \bibliographystyle{alpha}
\bibliography{bibl}

\end{document}